\theoremstyle{plain}
\newtheorem{thm}{Theorem}[section]
\newtheorem{lem}[thm]{Lemma}
\newtheorem{prop}[thm]{Proposition}
\newtheorem{cor}[thm]{Corollary}
\theoremstyle{definition}
\newtheorem{defn}[thm]{Definition}
\newtheorem{example}[thm]{Example}
\newtheorem*{rem*}{Remark}
\newcommand{\Rd}{{\mathbb{R}^d}}
\newcommand{\Rtd}{\mathbb R^d \times \mathbb R^d}
\newcommand{\N}{{\mathbb{N}}}
\newcommand{\Sd}{{\mathbb{S}^{d-1}}}
\newcommand{\R}{\mathbb{R}}
\newcommand{\Nop}{\mathcal N_\varepsilon}
\newcommand{\divop}{\mathcal D_\varepsilon}
\newcommand{\aeps}{\alpha_\eps}
\newcommand{\eps}{\varepsilon}
\newcommand{\intd}{\mathrm{d}}
\newcommand{\nv}{\vec{n}}
\newcommand{\niceQed}{\hfill$\qed$}
\newcommand{\m}{\mu}
\newcommand{\meps}{{\m}_\eps}
\newcommand*{\defeq}{\mathrel{\rlap{\raisebox{0.3ex}{$\m@th\cdot$}} \raisebox{-0.3ex}{$\m@th\cdot$}} =}
\DeclareMathOperator{\dist}{dist}
\DeclareMathOperator{\diver}{div}
\numberwithin{equation}{section}
\begin{document}
\begin{abstract}
We present a new proof of the classical divergence theorem in bounded domains. Our proof is based on a nonlocal analog of the divergence theorem and a rescaling argument. Main ingredients in the proof are nonlocal versions of the divergence and the normal derivative. We employ these to provide definitions of well-known nonlocal concepts such as the fractional perimeter.
\end{abstract}

\author{Solveig Hepp}
\address{Fakultät für Mathematik, Universität Bielefeld, Germany}
\email{shepp@math.uni-bielefeld.de}

\author{Moritz Kassmann}
\address{Fakultät für Mathematik, Universität Bielefeld, Germany}
\email{moritz.kassmann@uni-bielefeld.de}

\keywords{divergence theorem, Gauss-Green formula, nonlocal operators}
\subjclass{26B20, 35R09, 47G20}
\thanks{Solveig Hepp gratefully acknowledges financial support by the German Research Foundation (GRK 2235 - 282638148). }

\title{The divergence theorem and nonlocal counterparts}

\maketitle

\section{Introduction}\label{sec:intro}
The divergence theorem is indisputably one of the most significant theorems in analysis. 
Its history is closely linked with the names of Lagrange,  Gauss, Green, Ostrogradsky, and Stokes. 
In its standard version, the theorem states that
\begin{align}\label{eq:div-theo}\tag{DT}
    \int_\Omega \diver F(x) \intd x = \int_{\partial\Omega} F(x) \cdot \nv(x) \intd \sigma_{d-1}(x),
\end{align}
for a bounded $C^1$-domain $\Omega \subset \Rd$ and a continuously differentiable vector field $F: \overline\Omega \to \Rd$. In this notation, $\nv(x)$ is the outward unit normal vector at a point $x$ on the boundary of $\Omega$ and $\sigma_{d-1}$ is the $(d-1)$-dimensional surface measure. The divergence theorem has been established in different settings that usually involve a trade-off between the smoothness of the domain $\Omega$ and the smoothness of the vector field $F$. For the early history, we refer to the detailed discussion in \cite{Kri54}. More advanced formulations of the divergence theorem make use of the discoveries in geometric measure theory by Caccioppoli, De Giorgi and Federer. The expositions in \cite{Mag12, EvGa15} provide a very good introduction to this topic.

In this note, we discuss a nonlocal version of divergence and normal derivative and provide a nonlocal divergence theorem analogous to \eqref{eq:div-theo}. Whereas the proof of the classical divergence theorem is quite involved, the proof of the nonlocal divergence theorem is a very simple application of Fubini's theorem. Nevertheless, by choosing a specific sequence of kernels, the nonlocal divergence and normal derivative converge to their local counterparts, see \autoref{prop:div_conv} and \autoref{thm:normal_conv}. This approach allows for a novel and elementary proof of the classical divergence theorem. 
We apply our method to give a proof in bounded $C^1$-domains and discuss how to extend it to more general domains such as polytopes. 

In order to formulate the nonlocal divergence theorem, we need to define nonlocal operators corresponding to the divergence and the inner product $F(x) \cdot \nv(x)$. To this end, we consider an even function $\alpha: \Rd \setminus \{0\} \to [0, \infty)$ and a symmetric measure $\m(h)\intd h$ with

\begin{align}\label{eq:assum_alpha-mu}
\begin{split}
\int_\Rd \min\{1, |h|^2 \} \; \alpha (h) \, \m (h) \intd h < \infty \,.
\end{split} 
\end{align}

Furthermore, we assume that $\m(h)\intd h$ is absolutely continuous with respect to $\alpha(h) \intd h$. We call a tuple $(\alpha, \m)$ with the above properties \textit{admissible}. Condition \eqref{eq:assum_alpha-mu} says that $\alpha \, \m$ is the density of a L\'{e}vy measure. 

The nonlocal divergence operator and the nonlocal normal operator for an antisymmetric function $f: \Rtd \to \R$ are defined as follows: 
\begin{alignat}{1}
    &\mathcal D_{\m} f(x) 
        \defeq  2 \, \operatorname{pv.} \int_\Rd f(x, y) \; \m (y-x) \intd y, \quad
        x \in \Omega \label{eq:def-Dmu}\\
    &\mathcal N_{\m} f(x) 
        \defeq - 2 \int_\Omega f(x, y) \; \m(y-x) \intd y,
        \quad x \in \Omega^c \label{eq:def-Nmu}
\end{alignat}

In this context, the antisymmetric function $f: \Rtd \to \R$ may be interpreted as a nonlocal stand-in for a vector field, see \autoref{lem:well-posedness-one} and  \autoref{lem:well-posedness-two} for more details. A special case of a nonlocal vector field is the nonlocal gradient vector field, defined for a scalar function $\varphi: \Rd \to \R$ by  
\begin{align}\label{eq:def-grad-nonloc}
	\mathcal G_\alpha \varphi (x, y) \defeq 
	\alpha (y-x) \, \big(\varphi (y) - \varphi (x)\big) \,.
\end{align}
In \autoref{lem:duality}, we prove duality of the operators $\mathcal{D}_{\m}$ and $\mathcal{G}_\alpha$. When writing $\mathcal D_{\m} f(x)$ or $\mathcal N_{\m} f(x)$, we always assume $f$ to be regular enough for the integrals to converge. The nonlocal divergence theorem reads: 

\begin{prop}[nonlocal divergence theorem]\label{prop:nonlocal_div}
    Let $\Omega \subset \Rd$ be measurable, $f: \Rtd \to \R$ antisymmetric and sufficiently regular and $\m(h)\intd h$ a symmetric measure. Then 
        \begin{align}\label{eq:div-nonloc}\tag{NT}
        \int_\Omega \mathcal D_{\m} f(x) \intd x = \int_{\Omega^c} \mathcal N_{\m} f(x) \intd x.
        \end{align}
\end{prop}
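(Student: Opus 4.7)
The plan is to write $\mathcal{D}_\m f(x) = 2\operatorname{pv.}\int_\Rd f(x,y)\,\m(y-x)\intd y$, split the integration in $y$ according to whether $y\in\Omega$ or $y\in\Omega^c$, and show that the inside--inside contribution vanishes by symmetry while the inside--outside contribution is exactly $\int_{\Omega^c}\mathcal{N}_\m f(x)\intd x$ after applying Fubini. This reduces the whole statement to two bookkeeping manipulations that only use antisymmetry of $f$ and symmetry of $\m$.

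Concretely, I first decompose
\begin{align*}
\int_\Omega \mathcal{D}_\m f(x)\intd x
 = 2\int_\Omega \operatorname{pv.}\!\int_\Omega f(x,y)\,\m(y-x)\intd y\intd x
 + 2\int_\Omega\!\int_{\Omega^c} f(x,y)\,\m(y-x)\intd y\intd x.
\end{align*}
For the first term, I symmetrize in $(x,y)$: relabeling $x\leftrightarrow y$ in the double integral and using $f(y,x)=-f(x,y)$, $\m(x-y)=\m(y-x)$ shows that this integral equals its own negative, hence is zero. The antisymmetry of $f$ effectively removes the singularity on the diagonal, so the principal value collapses to an absolutely convergent integral under the regularity hypothesis on $f$ together with the L\'evy condition \eqref{eq:assum_alpha-mu}. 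For the second term, there is no diagonal issue since $\{x\in\Omega,\,y\in\Omega^c\}$ avoids $\{x=y\}$, so Fubini applies directly; swapping the order and relabeling the integration variables via antisymmetry/symmetry gives
\begin{align*}
2\int_\Omega\!\int_{\Omega^c} f(x,y)\,\m(y-x)\intd y\intd x
 = -2\int_{\Omega^c}\!\int_\Omega f(y,x)\,\m(x-y)\intd x\intd y
 = \int_{\Omega^c}\mathcal{N}_\m f(y)\intd y,
\end{align*}
which is exactly the right-hand side of \eqref{eq:div-nonloc}.

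The main obstacle is purely technical: making the symmetrization of the $\Omega\times\Omega$ piece and the use of Fubini rigorous despite the principal value. Concretely, one needs the clause \emph{sufficiently regular} to guarantee that, after symmetrization, the combined integrand $\tfrac12[f(x,y)\m(y-x)+f(y,x)\m(x-y)]=0$ appears inside an absolutely convergent integral, and that the $\Omega\times\Omega^c$ integrand is in $L^1(\Omega\times\Omega^c)$ against $\m$. A convenient sufficient condition is that $f$ is bounded on $\Rtd$ and satisfies $|f(x,y)|\leq C|y-x|$ near the diagonal, which, combined with \eqref{eq:assum_alpha-mu}, makes every integral above absolutely convergent and allows the principal value to be removed in the first piece and Fubini to be invoked in the second.
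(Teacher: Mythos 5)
Your decomposition and symmetrization argument is exactly the paper's proof: split the inner integral over $\R^d = \Omega\cup\Omega^c$, kill the $\Omega\times\Omega$ piece by symmetrizing in $(x,y)$ using antisymmetry of $f$ and symmetry of $\m$, and apply Fubini plus a relabeling to the $\Omega\times\Omega^c$ piece. The paper leaves the convergence issues open (``an approximation argument might be needed''), as you do.

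One caveat about your proposed sufficient condition: boundedness of $f$ together with $|f(x,y)|\le C|y-x|$ near the diagonal does \emph{not} in general make the $\Omega\times\Omega$ integral absolutely convergent under \eqref{eq:assum_alpha-mu}. For instance, with $\alpha\equiv 1$ and $\m(h)=|h|^{-d-s}$, $s\in(1,2)$, condition \eqref{eq:assum_alpha-mu} holds but $\int_{B_1}|h|\,\m(h)\intd h=\infty$, so the first-order bound $|f(x,y)|\le C|y-x|$ is not enough. What rescues the principal value is a second-order cancellation, e.g. $\int_{B_1}|f(x,x+h)+f(x,x-h)|\,\m(h)\intd h<\infty$, which is precisely the hypothesis the paper imposes in \autoref{lem:well-posedness-one}; a $C^1$-type modulus $|f(x,x+h)+f(x,x-h)|\le C|h|^2$ would also do. This does not affect the structure of your argument, only the precise reading of ``sufficiently regular.''
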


\begin{proof} For a symmetric function $K: \Rtd \to [0, \infty]$ and an antisymmetric function $f: \Rtd \to \R$
\begin{align*}
     \int_{\Omega} & \Big( 2 \int_{\R^d} f(x, y) 
    K(x,y) \intd y \, \Big) \intd x =  2 \int_\Omega \int_{\Omega^c} f(x, y) K(x,y) \intd y \, \intd x\\
    &= - 2 \int_\Omega \int_{\Omega^c} f(y, x) 
    K(x,y) \intd y \, \intd x =  \int_{\Omega^c}  \Big( -2 \int_\Omega f(x, y)  K(x,y)  \intd y \,\Big) \intd x \,.
\end{align*}
Equation \eqref{eq:div-nonloc} follows if we choose $K(x,y)=\mu(y-x)$. An approximation argument might be needed to show that all integrals converge. 
\end{proof}

Let us explain how we will derive the classical divergence theorem \eqref{eq:div-theo} from its simple nonlocal counterpart \eqref{eq:div-nonloc}. The main ingredient in the proof is a suitably normalized sequence of admissible pairs $(\alpha_\eps, \m_\eps)_{\eps\in(0,1)}$. Given a vector field $F \in C^1(\overline\Omega; \Rd)$, we will define  a sequence of antisymmetric functions $f_{\alpha_\eps}: \Rtd \to \R$. The main step of the proof is to show 
\begin{align}
    &\lim_{\eps\to 0} \int_\Omega \mathcal D_{\meps} f_{\aeps} (x) \intd x
    = \int_{\Omega}\diver F(x) \intd x, \label{eq:div_conv}\tag{DC} \\
    &\lim_{\eps \to 0} \int_{\Omega^c} \mathcal N_{\meps} f_{\aeps}(x) \intd x 
    = \int_{\partial\Omega} F(x) \cdot \nv(x) \intd \sigma_{d-1}(x) \,. \label{eq:normal_conv}\tag{NC}
\end{align}
Then \eqref{eq:div_conv} and \eqref{eq:normal_conv} together with \autoref{prop:nonlocal_div} imply the classical divergence theorem \eqref{eq:div-theo}.

The convergence result \eqref{eq:div_conv} is proved in \autoref{prop:div_conv} for bounded domains. 
The claim \eqref{eq:normal_conv} is more involved because it includes the nonlocal normal derivative and a change of dimension in the integration domain. Its proof for bounded $C^1$-domains is given in \autoref{thm:normal_conv} where we will choose $\aeps$ and $\m_\eps$ for $\eps \in (0,1)$ as 
\begin{align}\label{eq:case-eps}
    \aeps(h) \defeq \eps^{-1} \;\mathbbm{1}_{B_\eps(0)}(h)
    \textrm{ and } 
    \m_\eps(h) \defeq \frac{d(d+2)}{\mathcal H^{d-1}(\mathbb S^{d-1})} \eps^{-d-1} 
    \;\mathbbm{1}_{B_\eps(0)}(h) \, ,
\end{align}

where $\mathcal H^{d-1}$ is the $(d-1)$-dimensional Hausdorff measure. Note that $\aeps \m_\eps$ behaves like the localizing kernels that often crop up in the context of peridynamics (see e.g. \cite{Du13}).  Having proved \eqref{eq:normal_conv} for localizing kernels, the classical divergence theorem follows from the nonlocal theorem with (\ref{eq:div_conv}).
A version of (\ref{eq:normal_conv}) for more general functions $\aeps$ and $\m_\eps$ can be obtained from these results, see \autoref{cor:normal_conv_general}.
At the end of \autoref{sec:proof-classical-theorem}, we explain how to prove \eqref{eq:normal_conv} for more general domains such as polytopes.

Having a notion of nonlocal divergence at hand, one may employ it to reformulate interesting nonlocal objects. In \autoref{sec:frac-operators} we provide three examples for the special case

\begin{align}\label{eq:case-frac}
\alpha_\eps (h) \defeq |h|^{-1+\eps} \text{ and } \m_\eps (h) \defeq \frac {2d\eps(1-\eps)}{\mathcal H^{d-1}(\mathbb S^{d-1})} |h|^{-d-(1-\eps)} \,.
\end{align}

Both examples, \eqref{eq:case-eps} and \eqref{eq:case-frac}, share a certain normalizing property. Yet, the two examples are very different. In \eqref{eq:case-eps} $\m_\eps$ is a bounded function with compact support, which shrinks as $\eps \to 0$. In \eqref{eq:case-frac} $\m_\eps$ is singular at $h=0$ and supported in all of $\R^d$. As will become clear by the examples provided in \autoref{sec:frac-operators} it makes sense to call \eqref{eq:case-frac} the fractional case where $s=1-\eps \in (0,1)$ is the order of differentiability. Writing
\begin{align}\label{eq:def-frac-div-grad}
\operatorname{div}^{(s)} f = \mathcal{D}_{\m_{1-s}} f, \quad \nabla^{(s)} \varphi = \mathcal{G}_{\alpha_{1-s}} \varphi  
\end{align} 
with $\m$ and $\alpha$ as in \eqref{eq:case-frac}, we obtain a definition of the fractional divergence and the fractional gradient. Note that $\nabla^{(s)} \varphi (x,y) = \frac{\varphi(y) - \varphi(x)}{|y-x|^{s}}$. This notion of fractional divergence enables us to write nonlocal objects in a way that bears a structural similarity to their classical definitions. In \autoref{sec:frac-operators}, we exemplify this approach through the fractional $p$-Laplacian, the fractional perimeter, and the fractional mean curvature 

\medskip

We now discuss some related literature. 

The operators $\mathcal D_\m$ and $\mathcal N_\m$ have been introduced in several works as nonlocal counterparts of the divergence resp. the normal derivative in the context of a nonlocal vector calculus, e.g. in  \cite{GL10, Du13, ALG15, MiHi2015, DRV17, ClWa20, DEl21, CoSt22}. Different instances of these operators are obtained by choosing different kernels, i.e. by varying the functions $\alpha$ and measures $\m$ in our definition of $\mathcal D_\m$ and $\mathcal N_\m$.  See \cite{MiHi2015} for an abstract nonlocal divergence operator, where the fractional divergence appears as an example. Fractional kernels like \eqref{eq:case-frac} are generally often used in connection with problems related to real harmonic analysis or integro-differential operators. The articles \cite{MaSch18, AJS22} develop a fractional vector calculus including the fractional divergence operator $\operatorname{div}^{(s)}$. A localizing kernel like \eqref{eq:case-eps} is prominent in works related to peridynamics \cite{GL10, Du13} and appears in the approximation of minimal surfaces, perimeter, and curvature \cite{MRT19}. 

Nonlocal operators in bounded domains that do not take into account values outside of the domain may allow for integration-by-parts formulas as well. Integration by parts formulas in this case are provided and applied to linear operators in \cite{Gua06}, \cite{GaWa17} and to nonlinear operators in \cite{War16}.

As explained above, nonlocal versions of the divergence theorem can be easily shown for the operators $\mathcal D_\m$ and $\mathcal N_\m$. In \cite{Du13}[Section 5] as well as \cite{GL10}[Section 2], the connection between the local and nonlocal version of the divergence theorem is discussed. To this end, vector fields $q: \Rd \to \Rd$ are considered that can be written as an integral over a specially-defined antisymmetric function $p: \Rtd \to \R$. 
Then, it is possible to express $\int_{\partial\Omega} q(x) \cdot \nv(x) \intd \sigma_{d-1}(x)$ as $\int_{\Omega^c} \int_\Omega p(x, y) \intd y \intd x $. 
Since this identity is proved with the help of the the local divergence theorem \eqref{eq:div-theo}, this approach does not provide a proof of \eqref{eq:div-theo}.

The nonlocal normal derivative is often brought up in the context of Green's first identity, also called the Gauss-Green formula. Let $\nu : \R^d \setminus \{0\} \to [0, \infty)$ be a symmetric function. For functions $\varphi, \psi: \Rd \to \R$, we define 
\begin{align*}
L_\nu  \varphi(x) &\defeq 2 \textrm{ pv. }  \int_{\R^d} \big(\varphi(x) - \varphi(y)\big) \nu (x-y) \intd y, \quad x \in \Omega\,, \\
N_\nu  \varphi (y) &\defeq 2 \int_\Omega \big(\varphi(y) - \varphi(x)\big) \nu (x-y) \intd x, \quad y \in \Omega^c \,, \\
E_\nu (\varphi, \psi) &\defeq \iint_{(\Omega^c \times\Omega^c)^c} \big(\varphi(x)-\varphi(y)\big)\big(\psi(x)-\psi(y)\big) \nu (x-y) \intd x \intd y \,.
\end{align*}
Then the nonlocal Gauss-Green formula reads
\begin{align}  
\int_\Omega L_\nu \varphi(x) \psi(x) \intd x = E_\nu  (\varphi, \psi) - \int_{\Omega^c} N_\nu  \varphi(y) \psi(y) \intd y.
\end{align}
If we set $f(x, y) = \varphi(x) - \varphi (y)$, then we obtain $L_\nu  \varphi(x) = \mathcal D_{\nu} f(x)$ and $N_\nu  \varphi(y) = \mathcal N_{\nu} f(y)$. A very general formulation of the nonlocal Gauss-Green theorem is presented in \cite{FK22}. 
Using appropriate sequences of kernels $\nu_\eps$ with properties similar to \eqref{eq:seq-levy1} and \eqref{eq:seq-levy2}, the convergence of the operators $L_{\nu_\eps}$, $E_{\nu_\eps}$, and $N_{\nu_\eps}$ to their respective local counterparts is discussed. 
More explicitly, it is shown that
$\int_\Omega L_{\nu_\eps} \varphi(x) \psi(x) \intd x$ converges to $\int_\Omega - \Delta \varphi(x) \psi(x) \intd x$ and
$E_{\nu_\eps} (\varphi, \psi)$ converges to $ 2 \int_\Omega \nabla \varphi (x) \cdot \nabla \psi (x) \intd x$
as $\eps \to 0^+$. 
The local Gauss-Green formula then yields the convergence of the nonlocal normal derivative to the local normal derivative. 
A similar but more specific approach that only considers fractional kernels is pursued in \cite{DRV17}[Section 5.1].
Note that both approaches use the local version of the Gauss-Green theorem to establish the convergence of the nonlocal to the local normal derivative. An interesting Gauss-Green formula for nonlocal operators may be found in \cite{Scott23}[Theorem 1.1]. Although the operator $L$ and the bilinear form $\mathcal{E}$ are nonlocal, the classical normal derivative on the boundary of the domain is retained, which is due to the specific choice of the kernel.

\medskip

This note is structured as follows. In \autoref{sec:setting} we develop the nonlocal setup in more detail and discuss some basic properties of the nonlocal operators. The first part of \autoref{sec:proof-classical-theorem} contains our first main result \eqref{eq:div_conv}. The classical divergence theorem \eqref{eq:div-theo} then follows from \eqref{eq:normal_conv} resp. \autoref{thm:normal_conv}, which is proved in the second part of \autoref{sec:proof-classical-theorem}. In \autoref{sec:frac-operators} we apply our notion of fractional divergence and re-write well studied quantities like the fractional $p$-Laplace, the fractional perimeter, and the fractional mean curvature.

\medskip

\textbf{Acknowledgments:} The authors thank Florian Grube for proofreading and Michael Hinz for fruitful discussions.

\section{The nonlocal setting}\label{sec:setting}

In this section, we discuss the nonlocal operators $\mathcal{D}_{\m}, \mathcal{N}_{\m}, \mathcal{G}_\alpha$ and state some fundamental properties. We call an open bounded connected subset of $\R^d$ a domain and always assume $\Omega$ to be a domain in this sense.  Given $\alpha$ and $\m$ satisfying \eqref{eq:assum_alpha-mu}, we can explicitly state regularity conditions for an antisymmetric function $f$ that guarantee the existence of the integral $\mathcal D_{\m} f(x)$. Note that $\mathcal{N}_{\m} f(x)$ is defined without any smoothness assumption on $f$.

\begin{lem}\label{lem:well-posedness-one} Let $\m (h) \intd h$ be a symmetric measure, $f: \Rtd \to \R$ an antisymmetric measurable bounded function, $x \in \R^d$, and $M_1, M_2 > 0$. Assume
	\begin{align}\label{eq:well-posedness-div-ness1}
	&\int_{B_1} \big| f(x, x+h) + f(x,x-h) \big| \m (h) \intd h \leq M_1 \, ,\\
	\label{eq:well-posedness-div-ness2}
	&\int_{B_1^c} \big| f(x, x+ h) \big| \, \m(h) \intd h \leq M_2 \,.
	\end{align}
Then $\mathcal{D}_{\m} f (x)$ exists. 
\end{lem}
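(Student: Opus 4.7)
The plan is to unpack the principal-value definition and show that the limit
\[
\mathcal{D}_{\m} f(x) = 2 \lim_{\delta \to 0^+} \int_{\R^d \setminus B_\delta(x)} f(x,y)\,\m(y-x)\intd y
\]
exists and is finite. After the substitution $h = y-x$, I would split the integration domain into a near-field piece over $B_1 \setminus B_\delta(0)$ and a far-field piece over $B_1^c$, and handle the two parts separately.

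The far-field piece is immediate: by hypothesis \eqref{eq:well-posedness-div-ness2} the integral $\int_{B_1^c} f(x,x+h)\,\m(h)\intd h$ converges absolutely, is independent of $\delta$, and is bounded by $M_2$. So the only real question is whether the near-field integral has a limit as $\delta \to 0^+$.

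The key observation here is the standard symmetrization trick for Lévy-type principal values: since $\m$ is symmetric and $B_1 \setminus B_\delta(0)$ is invariant under $h \mapsto -h$, the change of variables $h \mapsto -h$ yields
\[
\int_{B_1 \setminus B_\delta} f(x, x+h)\,\m(h)\intd h
= \tfrac{1}{2} \int_{B_1 \setminus B_\delta} \bigl(f(x, x+h) + f(x, x-h)\bigr)\,\m(h)\intd h.
\]
The antisymmetry of $f$, which was precisely what prevented absolute convergence of the original integrand near $0$, has now been absorbed: the symmetrized integrand $f(x,x+h)+f(x,x-h)$ is dominated on $B_1$ by the integrable majorant provided by hypothesis \eqref{eq:well-posedness-div-ness1}. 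Hence dominated convergence delivers the limit $\delta \to 0^+$, and combining the two pieces gives $|\mathcal{D}_{\m} f(x)| \leq 2(M_1 + M_2)$.

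There is no real obstacle here; the argument is essentially a one-line exercise in symmetrization. The only thing worth being careful about is that the pointwise boundedness and measurability of $f$ are being used to ensure that the change of variables and the symmetrization are legitimate before passing to the limit, so that the symmetrized formulation is equivalent to the principal value rather than a different quantity.
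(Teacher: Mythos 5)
Your proof is correct and follows essentially the same route as the paper: split into a far-field part controlled by \eqref{eq:well-posedness-div-ness2} and a near-field part handled by the symmetrization trick (using the symmetry of $\m$ and of the annulus), with \eqref{eq:well-posedness-div-ness1} supplying the integrable majorant that makes the principal value converge. The paper writes the symmetrized integrand as $f(x,x+h) - 2f(x,x) + f(x,x-h)$, using $f(x,x)=0$, and is a little terser about the passage to the limit, but there is no substantive difference.
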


\begin{proof} Let $\eps \in (0, 1)$. For $x \in \R^d$, there holds
\begin{align*}
	&\int_{B^c_\eps (x)} f(x, y) \m (y-x) \intd y = \int_{B_\eps^c} f(x, x+h) \m (h) \intd h \\
	&= \int_{B_\eps^c \cap B_1} f(x, x+h) \m(h) \intd h + \int_{B_1^c} f(x, x+h) \m(h) \intd h\\
	&= \frac12 \int_{B_\eps^c \cap B_1} f(x, x+h) - 2 f(x,x) + f(x,x-h) \m (h) \intd h + \int_{B_1^c} f(x, x+h) \m(h) \intd h \\
	&\leq \frac 12 \int_{B_1} \big| f(x, x+h) + f(x,x-h) \big| \m (h) \intd h + M_2
	\leq M_1 + M_2 \,.
\end{align*}	
This proves the existence of the principal value integral in the definition of $\mathcal{D}_{\m} f (x)$.
\end{proof}

We remark that condition \eqref{eq:well-posedness-div-ness1} couples the integrability of $\m (h)$ at $h=0$ with the regularity of the function $f(x,y)$ at the diagonal $x=y$. Condition \eqref{eq:well-posedness-div-ness2} ensures the integrability of $f(x, x+ h)$ with respect to $\m(\intd h)$ for $h$ away from zero. 

The duality of divergence and gradient is naturally recovered in our nonlocal setup. Recall the definition of the nonlocal divergence $\mathcal{D}_{\m}$ in \eqref{eq:def-Dmu}. We recall the standard scalar product for functions $\varphi, \psi: \Rd \to \R$ as 
$\langle \varphi, \psi \rangle \defeq \int_\Rd \varphi (x) \psi(x) \intd x$, and define a nonlocal scalar product for functions $f, g: \Rtd \to \R$ of two arguments as 
$\langle f, g \rangle_\m \defeq \int_\Rd \int_\Rd f(x, y) g(x, y) \m(y-x) \intd y \intd x.$
This immediately yields the following lemma.
\begin{lem}\label{lem:duality}
	Let $\alpha: \Rd \to \R$ be even, $\m(h)\intd h$ a symmetric measure, $f: \Rtd \to \R$ antisymmetric and sufficiently regular, and $\varphi: \Rd \to \R$ a scalar function. 
	Then 
	$$\langle \mathcal D_{\m} f, \varphi \rangle = - \langle f, \mathcal G_\alpha \varphi \rangle_{\alpha^{-1} \m} \,.$$
\end{lem}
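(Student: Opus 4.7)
The plan is to expand both sides directly from the definitions and then use the antisymmetry of $f$ together with the symmetry of $\m$ to match them.

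Starting with the right-hand side and unpacking the weighted nonlocal inner product, the two factors of $\alpha$ cancel:
\begin{align*}
-\langle f, \mathcal G_\alpha \varphi \rangle_{\alpha^{-1}\m}
= -\int_{\R^d}\!\!\int_{\R^d} f(x,y)\,\alpha(y-x)\,(\varphi(y)-\varphi(x))\,\frac{\m(y-x)}{\alpha(y-x)}\intd y\intd x
= -\int_{\R^d}\!\!\int_{\R^d} f(x,y)\,(\varphi(y)-\varphi(x))\,\m(y-x)\intd y\intd x.
\end{align*}
I would then split this into the $\varphi(x)$-piece and the $\varphi(y)$-piece. In the $\varphi(y)$-piece, I would swap the variables $x\leftrightarrow y$ and use antisymmetry of $f$ together with symmetry of $\m$, namely $f(y,x)=-f(x,y)$ and $\m(x-y)=\m(y-x)$, to rewrite it as a second copy of the $\varphi(x)$-piece with the same sign. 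The outcome is that the right-hand side collapses to
\[
2\int_{\R^d}\!\!\int_{\R^d} f(x,y)\,\varphi(x)\,\m(y-x)\intd y\intd x,
\]
which, after pulling $\varphi(x)$ out of the inner integral and recognizing the definition \eqref{eq:def-Dmu}, equals $\langle \mathcal D_\m f,\varphi\rangle$.

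The main technical point, and the place where the standing "sufficiently regular" hypothesis has to do work, is the interchange of the $x$- and $y$-integrations in the presence of the principal value that is hidden inside $\mathcal D_\m f$. I would handle this by first writing $\mathcal D_\m f(x)=\lim_{\eps\to 0}2\int_{B_\eps^c(x)} f(x,y)\m(y-x)\intd y$, multiplying by $\varphi(x)$, and using the bound furnished by \autoref{lem:well-posedness-one} (condition \eqref{eq:well-posedness-div-ness1} controls the symmetrized integrand near the diagonal and \eqref{eq:well-posedness-div-ness2} controls the tail). Together with enough decay/boundedness of $\varphi$ to make things integrable in $x$, this lets me pass the $\eps\to 0$ limit inside the outer $x$-integral by dominated convergence. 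Once the pv is gone, Fubini is applicable because the integrand $f(x,y)(\varphi(y)-\varphi(x))\m(y-x)$, after the symmetrization described above, is absolutely integrable under the same assumptions.

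The hardest step will be the Fubini/pv juggling rather than the algebra. Concretely, one has to verify that $\iint |f(x,y)|\,|\varphi(y)-\varphi(x)|\,\m(y-x)\intd y\intd x<\infty$; this is exactly what the symmetrization trick used in the proof of \autoref{lem:well-posedness-one} provides, because pairing the difference $\varphi(y)-\varphi(x)$ with the antisymmetrized $\tfrac12(f(x,x+h)+f(x,x-h))$ near $h=0$ gives an integrand that is controlled by \eqref{eq:well-posedness-div-ness1} (together with a standard Lipschitz estimate on $\varphi$ to absorb the extra difference). Once absolute integrability is in hand, the variable swap, antisymmetry of $f$, and symmetry of $\m$ give the identity without any further subtlety, and no principal value is needed on the right-hand side.
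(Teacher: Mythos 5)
Your algebra is exactly the paper's: expand the weighted inner product, cancel $\alpha\cdot\alpha^{-1}$, split off the $\varphi(x)$- and $\varphi(y)$-pieces, and fold the second into the first via the swap $x\leftrightarrow y$ together with antisymmetry of $f$ and symmetry of $\m$, arriving at $2\int\varphi(x)\int f(x,y)\,\m(y-x)\,\intd y\,\intd x=\langle\mathcal D_\m f,\varphi\rangle$. The only difference is that you spell out the Fubini/principal-value justification (dominated convergence using the bounds of Lemma~\ref{lem:well-posedness-one}), which the paper subsumes under the blanket hypothesis that $f$ is "sufficiently regular"; this is a reasonable elaboration, not a different approach.
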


\begin{proof} The proof is a simple calculation:
\begin{alignat*}{1}
	\langle f, \mathcal G_\alpha \varphi \rangle_{\alpha^{-1} \m} 
	=& \int_\Rd \int_\Rd f(x,y) \big( \varphi(y) - \varphi(x) \big) \alpha (y-x) \alpha^{-1}(y-x) \m (y-x) \intd y \intd x\\
	=&- \int_\Rd \int_\Rd f(y, x) \varphi(y) 
	\m (y-x) \intd y \intd x\\
	&- \int_\Rd \int_\Rd f(x, y) \varphi (x) \m (y-x) \intd y \intd x \\
	=& - 2 \int_\Rd \varphi (x) \int_\Rd f(x, y)
	\m (y-x) \intd y \intd x
	= - \langle \mathcal D_{\m} f, \varphi \rangle.
\end{alignat*}
\end{proof}

In order to connect the local with the nonlocal divergence theorem, we need to construct an antisymmetric function $f: \Rtd \to \R$ from a given vector field $F: \overline{\Omega} \to \Rd$ that can be used in the operators $\mathcal D_{\m}$ and $\mathcal N_{\m}$. We will permit ourselves a little terminological leeway and call 
\begin{align}\label{eq:nonloc_vf}\tag{VF}
    f_\alpha (x, y) \defeq \alpha (x - y) \int_0^1 F\big(x + t (y-x)\big) \cdot (y-x) \intd t\,, \quad x,y \in \R^d, x\ne y
\end{align}
the \textit{nonlocal vector field generated by} $F$. We always set $f_\alpha (x, x) = 0$. In this definition, we assume $F$ to be extended to a compactly supported vector field on $\R^d$, see also \cite[Theorem 6.2.4]{stein}. The specific choice of the extension does not impact our arguments. Observe that $f_\alpha$ is antisymmetric. Since the extension of $F$ is assumed to have compact support, we know that $f_\alpha$ will be bounded on $\Rtd$ for any $\alpha$. If $F(x)$ describes a force vector at $x$, then one might interpret the scalar $f_\alpha (x,y)$ as the aggregated directed magnitude of $F$ along the line between $x$ and $y$.
It is easy to see that the nonlocal gradient vector field $\mathcal G_\alpha \varphi$ is generated by the classical gradient vector field $\nabla \varphi: \Rd \to \Rd$ in the sense of \eqref{eq:nonloc_vf}.

We state conditions on $F$, $\alpha$, and $\m$ that guarantee the existence of the integral $\mathcal D_{\m} f_\alpha (x)$.

\begin{lem}\label{lem:well-posedness-two} Let $(\alpha, \m)$ be admissible in the sense of \eqref{eq:assum_alpha-mu}. Assume $F \in C^1(\Omega; \Rd)$ and let $f_\alpha$ be the nonlocal vector field generated by $F$ with regard to $\alpha$ as in \eqref{eq:nonloc_vf}. Then $\mathcal D_{\m} f_\alpha (x)$ exists at every point $x \in \Omega$.
\end{lem}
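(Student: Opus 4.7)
The plan is to invoke \autoref{lem:well-posedness-one}, so the task reduces to verifying the two integrability conditions \eqref{eq:well-posedness-div-ness1} and \eqref{eq:well-posedness-div-ness2} for the specific antisymmetric function $f_\alpha$ defined in \eqref{eq:nonloc_vf}.

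The first step is an elementary algebraic manipulation of $f_\alpha(x, x+h) + f_\alpha(x, x-h)$. Using the evenness of $\alpha$ and a change of variable $t \mapsto 1-t$ (or more directly by factoring out $\alpha(h)$ and inspecting the integrals), one gets
\begin{align*}
f_\alpha(x, x+h) + f_\alpha(x, x-h)
= \alpha(h) \int_0^1 \bigl( F(x + t h) - F(x - t h) \bigr) \cdot h \, \intd t.
\end{align*}
Here the cancellation, forced by the antisymmetry encoded in \eqref{eq:nonloc_vf}, is the crucial gain. Since $F \in C^1$, a first-order Taylor expansion gives $|F(x+th)-F(x-th)| \leq 2 t |h| \, \|DF\|_{L^\infty(K)}$ for some neighborhood $K$ of $x$ and $h$ sufficiently small. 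Hence there is a constant $C = C(F,x)$ such that
\begin{align*}
\bigl| f_\alpha(x, x+h) + f_\alpha(x, x-h) \bigr| \leq C \, |h|^2 \, \alpha(h) \quad \text{for } h \in B_1.
\end{align*}
Integrating against $\m(h) \intd h$ and invoking admissibility \eqref{eq:assum_alpha-mu} (which in particular bounds $\int_{B_1} |h|^2 \alpha(h) \m(h) \intd h$) yields \eqref{eq:well-posedness-div-ness1}.

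For the far-field condition \eqref{eq:well-posedness-div-ness2}, the key observation is that although the naive bound $|f_\alpha(x,x+h)| \leq \|F\|_\infty \, |h| \, \alpha(h)$ grows in $|h|$, the compact support of the extension of $F$ limits the effective range of $t$. Indeed, if $\operatorname{supp}(F) \subset B_R$ and $x \in \Omega$ is fixed, then $\{t \in [0,1] : x + t h \in \operatorname{supp}(F)\}$ has Lebesgue measure at most $C/|h|$ for $|h| > 1$, giving
\begin{align*}
\bigl| f_\alpha(x, x+h) \bigr| \leq \alpha(h) \, \|F\|_\infty \, |h| \cdot \frac{C}{|h|} = C' \, \alpha(h) \quad \text{for } |h| > 1.
\end{align*}
Since $\min\{1,|h|^2\} = 1$ on $B_1^c$, admissibility implies $\int_{B_1^c} \alpha(h) \m(h) \intd h < \infty$, which yields \eqref{eq:well-posedness-div-ness2}.

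The only genuine subtlety I anticipate is arranging the large-$|h|$ estimate cleanly, since one must use the compact support of $F$ to cancel the stray factor of $|h|$ coming from the definition \eqref{eq:nonloc_vf}; everything else is a straightforward combination of Taylor's theorem for $F$ and the defining property of the Lévy measure $\alpha \m$. With both estimates in hand, \autoref{lem:well-posedness-one} delivers the existence of $\mathcal D_{\m} f_\alpha(x)$.
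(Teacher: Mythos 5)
Your proposal is correct and follows the same overall strategy as the paper: reduce to \autoref{lem:well-posedness-one} and verify \eqref{eq:well-posedness-div-ness1}, \eqref{eq:well-posedness-div-ness2} using the $C^1$ regularity and compact support of the extended vector field $F$. For the near-diagonal condition \eqref{eq:well-posedness-div-ness1} your computation is essentially identical to the paper's. For the far-field condition \eqref{eq:well-posedness-div-ness2} your argument is in fact more careful than the paper's. The paper asserts that there is an $R$ such that $F(x+th)=0$ for all $t\in[0,1]$, $x\in\Omega$, and $|h|>R$, and thereby restricts the $h$-integral to $B_R\setminus B_1$; but this assertion fails at $t=0$ (and for small $t$) whenever $F(x)\neq 0$, so $f_\alpha(x,x+h)$ is not generally zero for large $|h|$. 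Your observation that the set $\{t\in[0,1]: x+th\in\operatorname{supp}(F)\}$ has measure $O(1/|h|)$ is the clean way to cancel the stray factor of $|h|$ and arrive at the uniform bound $|f_\alpha(x,x+h)|\leq C'\alpha(h)$ on $B_1^c$, after which admissibility \eqref{eq:assum_alpha-mu} closes the argument exactly as in the paper. In short: same approach, same conclusion, with a genuine small repair in the second estimate.
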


\begin{proof}
We verify conditions \eqref{eq:well-posedness-div-ness1} and \eqref{eq:well-posedness-div-ness2} of \autoref{lem:well-posedness-one}. We begin with \eqref{eq:well-posedness-div-ness1}. Let $x \in \Omega$. Using the fact that $F \in C^1(\Omega; \Rd)$, we have
\begin{align*}
	\int_{B_1} &\big| f_\alpha(x, x + h) + f_\alpha(x, x-h) \big| \m(h) \intd h \\
	&= \int_{B_1} \Big| \int_0^1 F(x+th) \cdot h + F(x-th)\cdot (-h)\big) \intd t \, \alpha(h) \Big| \m(h) \, \intd h\\
	&\leq || F ||_{C^1} \int_{B_1} |h|^2 \, \alpha(h) \m(h) \intd h < \infty 
\end{align*}
by \eqref{eq:assum_alpha-mu}. Next, let us verify \eqref{eq:well-posedness-div-ness2}. Since $\Omega$ is bounded and $F$ has compact support in $\R^d$, there is some constant $R$ such that $F(x + th) = 0$ for any $t \in [0, 1]$, $x \in \Omega$, and $|h| > R$. We see that 
\begin{align*}
	\int_{B_1^c} & \big|f_\alpha (x, x+ th) \big| \, \m(h) \intd h \leq 
	\int_{B_R \setminus B_1} \int_0^1 | F(x+th) \cdot h| \, \intd t \, \alpha(h) \m(h) \intd h\\
	&\leq R ||F||_{C^1} \int_{B_R\setminus B_1} \min\{ 1, |h|^2\} \alpha(h) \m(h) \intd h < \infty, 
\end{align*}
using \eqref{eq:assum_alpha-mu} once more. We have verified conditions \eqref{eq:well-posedness-div-ness1} and \eqref{eq:well-posedness-div-ness2}. \autoref{lem:well-posedness-one} completes the proof. 
\end{proof}

In order to prove the convergence of the nonlocal operators to their local counterparts, we generate whole families of nonlocal vector fields from the same vector field $F: \Omega \to \Rd$, using families of radial functions $(\alpha_\eps)_\eps$ and symmetric measures $(\m_\eps(h) \intd h)_\eps$. 
These families are closely related to the L\'{e}vy measures introduced, for example, in \cite{Fog21} or \cite{FK22}. 
Specifically, we consider radial functions $\big(\aeps: \Rd \setminus \{0\} \to \R \big)_{\eps \in (0, 1)}$ and symmetric measures $\m_\eps(h) \intd h$ (where the functions $\m_\eps$ are also radial) that satisfy the following properties: 
\begin{align}
    &\forall \eps > 0: \int_\Rd \min\{1, |h|^2 \} \; \aeps(h) \m_\eps (h) \intd h = d
    \label{eq:seq-levy1}\tag{L1} \,,\\
    &\forall \delta > 0: \lim_{\eps \to 0^+} \int_{|h| > \delta} \aeps(h) \m_\eps (h) \intd h = 0\label{eq:seq-levy2}\tag{L2}
\end{align}

The family  \eqref{eq:case-eps} of functions related to bounded localizing kernels satisfies \eqref{eq:seq-levy1} and \eqref{eq:seq-levy2}, as does the family related to the fractional case given in \eqref{eq:case-frac}.
When nonlocal vector fields generated from a local vector field $F$ as in \eqref{eq:nonloc_vf} are defined with respect to a whole family $(\aeps)_{\eps \in (0,1)}$, $(\meps)_{\eps \in (0,1)}$ instead of only single functions $\alpha$ resp. $\m$, we write $f_{\eps}$ instead of $f_{\alpha_\eps}$ and $\divop$ resp. $\Nop$ instead of $\mathcal D_{\meps}$ resp. $\mathcal N_{\meps}$.

\section{The local divergence theorem}\label{sec:proof-classical-theorem}

In this section we give the proof of the local divergence theorem using the nonlocal divergence theorem. We do this by verifying the convergences \eqref{eq:div_conv} and \eqref{eq:normal_conv}. First, we prove \eqref{eq:div_conv} for families of functions that satisfy \eqref{eq:seq-levy1} and \eqref{eq:seq-levy2}. Then we complete the proof of the local divergence theorem \eqref{eq:div-theo} by proving \eqref{eq:normal_conv}.

\subsection*{Proof of \eqref{eq:div_conv}} The following proposition proves the convergence result \eqref{eq:div_conv}.

\begin{prop}\label{prop:div_conv}
    Assume $\Omega \subset \Rd$ is open and bounded and take $(\aeps)_{\eps\in(0,1)}$ and $(\m_\eps (h) \intd h)_{(\eps \in (0, 1)}$ to be families of radial functions resp. symmetric measures satisfying (\ref{eq:seq-levy1}) and (\ref{eq:seq-levy2}). 
    Let $F \in C^1(\overline\Omega; \Rd)$ be a vector field and $(f_\eps: \Rtd \to \R)_{\eps \in (0,1)}$ be a family of nonlocal vector fields  with respect to $(\aeps)$ in the sense of (\ref{eq:nonloc_vf}). 
    Then 
    \begin{align*}
        \int_\Omega \divop f_\eps(x) \intd x \to \int_\Omega \diver F(x) \intd x
    \end{align*}
    as $\eps \to 0^+$.
\end{prop}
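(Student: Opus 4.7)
The plan is to Taylor-expand $F$ inside $\divop f_\eps(x)$ and use \eqref{eq:seq-levy1}--\eqref{eq:seq-levy2} to identify the leading term as $\diver F$. First, I would rewrite $\divop f_\eps(x)$ as an absolutely convergent integral. Substituting $h = y-x$, using that $\aeps$ and $\meps$ are even, and symmetrising in $h \mapsto -h$, the principal value disappears and one obtains
\[
\divop f_\eps(x) = \int_\Rd \aeps(h)\, \meps(h) \int_0^1 \bigl[F(x+th) - F(x-th)\bigr] \cdot h \, \intd t \, \intd h.
\]
I would then split the $h$-integral at a threshold $\delta \in (0,1]$, treating the short range $\{|h| \le \delta\}$ and the long range $\{|h| > \delta\}$ separately, sending $\eps \to 0$ first and $\delta \to 0$ afterwards.

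On the short range I would use the $C^1$-regularity of $F$ (extended to a $C^1$ vector field with compact support in $\Rd$). Taylor's theorem gives
\[
\int_0^1 \bigl[F(x+th) - F(x-th)\bigr] \cdot h \, \intd t = (DF(x) h) \cdot h + |h|^2 \rho_\delta(x,h),
\]
with $\sup_{x \in \Omega,\, |h|\le \delta} |\rho_\delta(x,h)| \le \omega(\delta)$, where $\omega$ is a modulus of continuity of $DF$. The radial symmetry of $\aeps \meps$ kills the off-diagonal moments $\int h_i h_j \aeps \meps \intd h$ for $i \ne j$ and forces the diagonal moments to be equal, so
\[
\int_{|h|\le \delta} (DF(x) h)\cdot h \; \aeps(h)\meps(h)\, \intd h = \diver F(x) \cdot \frac{1}{d}\int_{|h|\le \delta} |h|^2 \aeps(h) \meps(h) \, \intd h.
\]
Combining \eqref{eq:seq-levy1} and \eqref{eq:seq-levy2} shows the prefactor tends to $1$ as $\eps \to 0$ for any fixed $\delta$; meanwhile the $\rho_\delta$ remainder is bounded by $\omega(\delta)$ times $\int_{|h|\le \delta}|h|^2 \aeps \meps \intd h \le d$. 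Integration in $x$ therefore gives $\int_\Omega \diver F \, \intd x + O(\omega(\delta))$ in the limit $\eps \to 0$.

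For the long range I would exploit the compact support of the extended $F$, say $\operatorname{supp} F \subset B_S$. On the annulus $\delta < |h| \le R$ with $R \defeq \max\{1,2S\}$ the crude bound $\bigl|\int_0^1 [F(x+th)-F(x-th)] \cdot h \, \intd t\bigr| \le 2\|F\|_\infty R$ suffices. For $|h| > R$, each parametrised segment $t \mapsto x \pm th$ meets $B_S$ in a set of $t$-length $\le 2S/|h|$, which yields the $|h|$-independent bound $\bigl|\int_0^1 [F(x+th)-F(x-th)] \cdot h \, \intd t\bigr| \le 4S\|F\|_\infty$. In both regimes \eqref{eq:seq-levy2} gives $\int_{|h|>\delta} \aeps \meps \intd h \to 0$, so the long-range contribution to $\int_\Omega \divop f_\eps \, \intd x$ vanishes as $\eps \to 0$.

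Combining the two pieces yields $\limsup_{\eps \to 0} \bigl| \int_\Omega \divop f_\eps \, \intd x - \int_\Omega \diver F \, \intd x \bigr| \le C\omega(\delta)$, and sending $\delta \to 0$ concludes. The main obstacle I anticipate is the long-range tail: the integrand $F(x+th) \cdot h$ carries a factor $|h|$ which is \emph{not} controlled by \eqref{eq:seq-levy1} alone, since \eqref{eq:seq-levy1} only bounds the mass of $\min\{1,|h|^2\} \aeps \meps$. Compact support of the extension of $F$ converts this linear growth into a uniform bound once $|h|$ exceeds the diameter of $\operatorname{supp} F$, and this is the key quantitative input beyond \eqref{eq:seq-levy1}--\eqref{eq:seq-levy2}.
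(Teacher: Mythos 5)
Your proposal is correct and follows essentially the same strategy as the paper: rewrite $\divop f_\eps$ by symmetrising in $h$, split at a threshold $\delta$, Taylor-expand (FTC with integral remainder in the paper) on the short range, use radial symmetry to reduce $(DF(x)h)\cdot h$ to $\diver F(x)\,|h|^2/d$ and normalise via \eqref{eq:seq-levy1}--\eqref{eq:seq-levy2}, and kill the long range via \eqref{eq:seq-levy2} together with compact support of the extended $F$. Your more explicit treatment of the $|h|>R$ tail (segment $t\mapsto x+th$ meets $\operatorname{supp}F$ on a $t$-interval of length $\lesssim 1/|h|$) is exactly the observation underlying the paper's terser claim that $\int_0^1 F(x+th)\cdot h\,\intd t$ is uniformly bounded.
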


Our proof is inspired by the proof of Proposition 2.4 in \cite{Fog21}.

\begin{proof}
Let $x \in \Omega$ be arbitrary. 
We begin by splitting the domain of integration into a part where $x$ and $y$ are close together and a part where they are not. 
For any $\delta \in (0,1)$, write: 
\begin{alignat*}{1}
    \divop f_\eps (x)
    =& \; 2 \int_\Rd 
        \int_0^1 F(x + t(y-x)) \cdot (y-x) \intd t 
        \;\aeps(y-x) \m_\eps(y-x) \intd y\\
    =& \; 2
        \int_{|h| \leq \delta} 
        \int_0^1 F(x + th) \cdot h \intd t 
        \;\aeps(h) \m_\eps(h) \intd h\\
    &+ \; 2
        \int_{|h| > \delta} 
        \int_0^1 F(x + th) \cdot h \intd t 
        \;\aeps(h) \m_\eps(h) \intd h
\end{alignat*}

Now the second integral with $|h| > \delta$ goes to zero. 
Indeed, $\int_0^1 F\big(x + th\big) \cdot h \intd t$ is bounded since $F$ has compact support in $\R^d$.  
Hence, using (\ref{eq:seq-levy2}), we see that 
$$\int_{|h| > \delta} \int_0^1 |F(x+th)| |h| \intd t \; \aeps(h) \m_\eps(h) \intd y 
< c \int_{|h| > \delta} \; \aeps(h) \m_\eps(h) \intd y$$ 
vanishes as $\eps \to 0$.

For the part where $|h| \leq \delta$, we use the symmetry of the integral as well as the fundamental theorem of calculus to obtain
\begin{alignat*}{1}
    2& \; \frac 1 2 \int_{|h| \leq \delta}
        \int_0^1 [F(x+th) - F(x-th)] \cdot h \intd t \;
        \:\aeps(h) \m_\eps(h)  \intd h\\
    &=  \int_{|h| \leq \delta} 
        2 \int_0^1 t \int_0^1 [DF(x - th + 2sth)h] \cdot h \; \intd s \intd t
        \:\aeps(h) \m_\eps(h) \intd h\\
    &=  \int_{|h| \leq \delta} 
        2 \int_0^1 t \int_0^1 [DF(x)h] \cdot h \; \intd s \intd t
        \:\aeps(h) \m_\eps(h) \intd h\\
    &\quad +  \int_{|h| \leq \delta} 
        2 \int_0^1 t \int_0^1 [DF(x-th + 2sth) - DF(x)]h \cdot h \intd s \intd t
        \:\aeps(h) \m_\eps(h) \intd h.
\end{alignat*}
By assumption, $DF$ is bounded on $\Rd$. Thus, for any $\eta > 0$, we find a $\delta \in (0,1)$ sufficiently small, such that any $|h| \leq \delta$ gives $|DF(x + (2st-t)h) - DF(x)| < \eta$. This allows us to estimate the second term by 
\begin{alignat*}{1}
    \int_{|h| \leq \delta} \eta |h|^2 \aeps(h) \m_\eps(h) \intd h \leq  \; \eta \int_\Rd \min\{1, |h|^2\} \aeps (h) \m_\eps(h) \intd h 
    = \eta d,
\end{alignat*} 
using \eqref{eq:seq-levy1}. Since $\eta$ was arbitrary, the second term must be zero. 

Now we treat the first term. 
For any $x \in \Omega$, we have 
    \[ [DF(x)h] \cdot h = \sum_{j=1}^d \sum_{i=1}^d \frac {\partial F_j} {\partial x_i} (x) h_i h_j.\]
By assumption, $\aeps \m_\eps$ is radial, which means that $\int_{|h| \leq \delta} h_i h_j \aeps(h) \m_\eps(h) \intd h = 0$ for $i \neq j$. 
We are thus allowed to disregard all terms in the foregoing sum where $i \neq j$. This yields: 
\begin{alignat*}{1}
    2&  \int_{|h| \leq \delta} 
        \int_0^1 t \int_0^1 [DF(x)h] \cdot h \; \intd s \intd t \;
        \aeps(h) \m_\eps(h) \intd h\\
    &=  \int_{|h| \leq \delta} 
        \sum_{j=1}^d \frac {\partial F_j} {\partial x_j} (x) h_j^2 \;
        \aeps(h) \m_\eps(h) \intd h\\
    &=  \sum_{j=1}^d \frac {\partial F_j} {\partial x_j} (x) 
        \int_{|h| \leq \delta}  h_1^2 \;
        \aeps(h) \m_\eps(h) \intd h\\
    &=  \diver F(x)
        \frac 1 d \int_{|h| \leq \delta}  |h|^2 \;
        \aeps(h) \m_\eps(h) \intd h
\end{alignat*}
For any $\delta \in (0,1)$, there holds
\begin{alignat*}{1}
    &\lim_{\eps \to 0} \int_{|h| \leq \delta} \min\{1, |h|^2\} \aeps(h) \m(h)\intd h 
    = d - \lim_{\eps \to 0} \int_{|h| > \delta} \min \{ 1, |h|^2 \} \aeps(h) \m(h)\intd h = d,
\end{alignat*}
using (\ref{eq:seq-levy1}), (\ref{eq:seq-levy2}), and the fact that $\min\{1, |h|^2\} \leq 1$. Thus
\[2  \int_{|h| \leq \delta} 
        \int_0^1 t \int_0^1 [DF(x)h] \cdot h \intd s \intd t \;
        \aeps(h) \m_\eps(h) \intd h \to \diver F(x) \text{ as } \eps \to 0.\]
Putting everything together produces the claim. 
\end{proof}

An interesting consequence of \autoref{prop:div_conv} is the convergence of the nonlocal scalar products introduced in \autoref{sec:setting} to their respective local counterparts. 
Precisely, we get for $\varphi \in C^2(\Rd)$,
$\langle \divop f_\eps, \varphi\rangle \to \langle \diver F, \varphi \rangle$ as $\eps \to 0$ 
by \autoref{prop:div_conv}. 
Local and nonlocal duality of divergence and gradient then provide the limit of the other scalar product,
$\langle f_\eps, \mathcal G_\eps \varphi \rangle_{\alpha_\eps^{-1}\m_\eps} \to \langle F, \nabla \varphi \rangle$ as $\eps \to 0$.

\subsection*{Proof of \eqref{eq:normal_conv}}
We will now complete the proof of the local divergence theorem \eqref{eq:div-theo} making use of the nonlocal divergence theorem \eqref{eq:div-nonloc}. Since \eqref{eq:div_conv} has already been established in \autoref{prop:div_conv}, it remains to prove \eqref{eq:normal_conv}. Note that \autoref{prop:div_conv} was stated for a general family of radial functions $(\aeps)$ and symmetric measures $(\m_\eps(h) \intd h)$ that satisfy \eqref{eq:seq-levy1} and \eqref{eq:seq-levy2}.  For \autoref{thm:normal_conv} we do not need this generality. 
The classical divergence theorem will follow from its nonlocal counterpart already if we can prove \eqref{eq:normal_conv} for specific families $(\aeps)$ and $(\m_\eps)$. 
We choose $(\aeps)$ and $(\m_\eps)$ as defined in (\ref{eq:case-eps}) and write for brevity $a_d \defeq d \frac{d+2}{\mathcal H^{d-1}(\mathbb S^{d-1})}$. 

The proof of (\ref{eq:normal_conv}) is provided in \autoref{thm:normal_conv} and has three major steps. 
First, we localize the problem using the fact that $\Omega$ is a $C^1$-domain. 
Next, we argue that for a point $x \in \Omega^c$, $\Nop f(x)$ evaluates to the inner product of $F(x) \cdot \nv(z_x)$, where $\nv(z_x)$ is the unit outward normal vector to $\partial\Omega$ at a point $z \in \partial\Omega$ that minimizes $\dist(x, \Omega)$. 
In the third step, we let $x \in \Omega^c$ approach the boundary to obtain the classical surface integral. 
For this step, we need an approximate identity defined in Lemma \ref{lemma:approx_id} that will accomplish the dimension collapse of $\Omega^c$ to $\partial\Omega$ in the limit. 
We are aware of the fact that similar constructions have been used before, see e.g. \cite[Theorem 6.1.5]{Hor90} or \cite[Theorem 3.2.39]{Fed69}.

For $t\in \R$, we introduce the shorthand notation $B_\eps^{t} \defeq \{h \in B_\eps(0) \mid h_d < t \}$. Note that if $t = 0$, $B_\eps^{t}$ is just the lower half-ball.

\begin{lem}\label{lemma:approx_id}
    The family of functions 
    \[\Big(k_\eps(t) \defeq - 2d \frac {d+2}{\mathcal H^{d-1}(\mathbb S^{d-1})} \; \eps^{-d-2} \int_{B_\eps^{-t}} h_d \, \intd h \; \mathbbm{1}_{(0, \eps)}(t)\Big)_{\eps\in(0,1)}
    \]
    is an approximate identity in 0.
\end{lem}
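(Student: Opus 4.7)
To prove that $(k_\eps)_{\eps \in (0,1)}$ forms an approximate identity at $0$, I would verify the three standard defining properties in turn: (i) $k_\eps \ge 0$; (ii) $\int_\R k_\eps(t)\,\intd t = 1$ for every $\eps \in (0,1)$; (iii) for every $\delta > 0$, $\int_{|t|>\delta} k_\eps(t)\,\intd t \to 0$ as $\eps \to 0^+$. Property (iii) will be essentially free from the support condition, and property (i) is a sign check; the only actual computation is in (ii), which is a Fubini argument combined with the standard spherical integral.

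For (i), fix $t \in (0, \eps)$. Then $B_\eps^{-t} = \{h \in B_\eps(0) : h_d < -t\}$ consists entirely of points with $h_d < 0$, so $\int_{B_\eps^{-t}} h_d \,\intd h < 0$. The prefactor $-2d(d+2)/\mathcal H^{d-1}(\mathbb S^{d-1})\,\eps^{-d-2}$ is negative times the integral, yielding $k_\eps(t) \ge 0$. For (iii), the support of $k_\eps$ is contained in $(0,\eps)$, so for every $\delta > 0$ and every $\eps < \delta$ the integral $\int_{|t|>\delta} k_\eps(t) \intd t$ vanishes identically.

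The heart of the argument is (ii). I would swap the order of integration to compute
\begin{align*}
\int_0^\eps \int_{B_\eps^{-t}} h_d \,\intd h \,\intd t
&= \int_{B_\eps(0)} h_d \int_0^\eps \mathbbm{1}_{\{t < -h_d\}} \,\intd t\, \intd h \\
&= \int_{B_\eps(0)\cap\{h_d<0\}} h_d\,(-h_d) \,\intd h
= -\tfrac12 \int_{B_\eps(0)} h_d^2 \,\intd h,
\end{align*}
where in the middle step I use that $|h_d|\le \eps$ on $B_\eps(0)$ so $\min(-h_d,\eps) = -h_d$ whenever $h_d < 0$, and the final equality is by symmetry in the sign of $h_d$. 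Then by radial symmetry $\int_{B_\eps(0)} h_d^2 \,\intd h = \tfrac1d \int_{B_\eps(0)} |h|^2 \,\intd h = \tfrac{\mathcal H^{d-1}(\mathbb S^{d-1})\,\eps^{d+2}}{d(d+2)}$. Substituting this back and cancelling constants shows
\[\int_\R k_\eps(t) \,\intd t = -2d\tfrac{d+2}{\mathcal H^{d-1}(\mathbb S^{d-1})}\,\eps^{-d-2}\cdot\Bigl(-\tfrac{\mathcal H^{d-1}(\mathbb S^{d-1})\,\eps^{d+2}}{2d(d+2)}\Bigr) = 1,\]
which is exactly the normalization built into the prefactor.

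There is no serious obstacle here — the specific numerical constant in the definition of $k_\eps$ was evidently reverse-engineered to make this computation produce $1$. The only mild care needed is in the Fubini step, to notice that truncating the $t$-integral at $\eps$ is harmless precisely because $|h_d| \le \eps$ on $B_\eps(0)$; otherwise one would get $\min(-h_d,\eps)$ instead of $-h_d$ and lose the clean identity.
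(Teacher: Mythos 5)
Your proof is correct but genuinely different from the one in the paper, and in fact cleaner. You both need to verify the same three things, and the sign check and the support argument are the same, so the substance is in the normalization $\int_\R k_\eps = 1$. The paper computes the inner integral $\int_{B_\eps^{-t}} h_d \,\intd h$ explicitly by slicing, obtaining $-\tfrac{1}{d+1}|B_1^{d-1}|(\eps^2-t^2)^{(d+1)/2}$, then performs the $t$-integral as a Beta-type integral and cancels everything via the explicit gamma-function formulas for $|B_1^{d-1}|$ and $\mathcal H^{d-1}(\mathbb S^{d-1})$ together with the functional equation $\Gamma(x+1)=x\Gamma(x)$. You instead invoke Fubini to integrate out $t$ first; the inner $t$-integral collapses to $-h_d$ (using, as you rightly note, that $|h_d|<\eps$ on $B_\eps(0)$ so the truncation at $\eps$ is inactive), and you are left with $-\tfrac12\int_{B_\eps(0)} h_d^2\,\intd h$, which reduces by symmetry to a one-line radial integral and avoids gamma functions entirely. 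What the paper's route buys is an explicit closed form for $\int_{B_\eps^{-t}} h_d\,\intd h$, which is of independent interest for the convergence argument; what your route buys is a shorter, more transparent normalization check. Both are correct.
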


\textbf{Proof}.
It is immediately clear that for any $\delta > 0$, $\int_{|t| > \delta} k_\eps (t) \intd t = 0$ once $\eps < \delta$, hence $\lim_{\eps\to0^+} \int_{|t| > \delta} k_\eps(t) \intd t = 0.$ Since $k_\eps \geq 0$ for any $\eps \in (0, 1)$, it remains to be shown that $\int_\R k_\eps(t) \intd t = 1$ for any $\eps \in (0,1)$.

Observe that the integral over $B^{-t}_\eps$ can be rewritten as
\begin{alignat*}{1}
	\int_{B_\eps^{-t}} h_d \, \intd h 
	    &= \int_{-\eps}^{-t} r |B_1^{d-1}| (\eps^2 - r^2)^{\frac {d-1} 2} \intd r \\	    
	    &= - \frac 1 2 |B_1^{d-1}| \int_0^{\eps^2 - t^2} w^{\frac {d-1} 2} \intd w 
	    = - \frac 1 {(d+1)} |B_1^{d-1}| (\eps^2 - t^2)^{\frac{d+1} 2},
\end{alignat*}
substituting $w\defeq \eps^2 - r^2$. 
Here $B_1^{d-1}$ is just the $(d-1)$-dimensional unit ball. 
There holds $|B_1^{d-1}| = \pi^{\frac{d-1}{2}} / \Gamma(\frac{d+1}{2})$ as well as  $\mathcal H^{d-1}(\mathbb{S}^{d-1}) = d\pi^{\frac{d}{2}}/\Gamma(\frac{d+2}{2})$, where $\Gamma$ is Euler's gamma function. Hence
\begin{alignat*}{1}
	\int_\R k_\eps(t) \intd t 
	&= \frac{2d(d+2)}{(d+1)} \; \eps^{-d-2} \; 
        \frac {|B_1^{d-1}|}{\mathcal H^{d-1}(\mathbb S^{d-1})} 
		\int_0^\eps (\eps^2 - t^2)^{\frac{d+1} 2} \intd t \\
	&= \frac{2d(d+2)}{(d+1)} \; \eps^{-d-2} \; 
        \frac {|B_1^{d-1}|}{\mathcal H^{d-1}(\mathbb S^{d-1})} 
		\frac{\pi^{\frac 1 2}\eps^{d+2} \Gamma(\frac{d+3}{2})}{2\Gamma(\frac{d+4}{2})}\\
	&= \frac{(d+2)}{(d+1)} \; 
		\frac{\Gamma(\frac{d+2}{2})\Gamma(\frac{d+3}{2})}{\Gamma(\frac{d+1}{2})\Gamma(\frac{d+4}{2})} = 1.
\end{alignat*}
In the last line, the basic fundamental equality $\Gamma(x+1) = x\Gamma(x)$ for Euler's gamma function was used. \niceQed

\begin{thm}\label{thm:normal_conv}
    Assume $\Omega \subset \Rd$ to be a bounded $C^1$-domain and let $(\aeps)_{\eps\in(0,1)}$ as well as $(\m_\eps)_{\eps \in (0,1)}$ be defined as in (\ref{eq:case-eps}). 
    Let $F \in C^1(\overline\Omega; \Rd)$ and $(f_\eps: \Rtd \to \R)_{\eps \in (0,1)}$ be a corresponding family of nonlocal vector fields with respect to $(\aeps)_\eps$ in the sense of (\ref{eq:nonloc_vf}). Then 
    \begin{align*}
        \int_{\Omega^c} \Nop f_\eps(x) \intd x \to \int_{\partial\Omega} F(x) \cdot \nv(x) \intd\sigma_{d-1}(x)
    \end{align*}
    as $\eps \to 0^+$.
\end{thm}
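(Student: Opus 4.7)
The plan is to carry out the three-step program hinted at in the paragraph preceding the theorem: localize $\Omega^c$ to a tubular neighborhood of $\partial\Omega$, compute $\Nop f_\eps(x)$ pointwise and identify it asymptotically with $k_\eps(\dist(x,\partial\Omega))\, F(z_x)\cdot\nv(z_x)$, and then exploit that $k_\eps$ is an approximate identity to collapse the volume integral into a surface integral on $\partial\Omega$.

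\textbf{Localization.} Since $\meps$ is supported in $B_\eps(0)$, the integrand $\Nop f_\eps$ vanishes outside the tubular neighborhood $U_\eps \defeq \{x \in \Omega^c : \dist(x,\partial\Omega) < \eps\}$. For $\eps$ small enough, every $x \in U_\eps$ has a unique nearest point $z_x \in \partial\Omega$ and can be written as $x = z_x + t\, \nv(z_x)$ with $t = \dist(x,\partial\Omega) \in (0,\eps)$; the associated change of variables $x \leftrightarrow (z_x, t)$ has Jacobian $1 + O(\eps)$ uniformly. Using a $C^1$ partition of unity on $\partial\Omega$, I reduce to the local model where, after rotation and translation, $z_x = 0$, $\nv(z_x) = e_d$, and $\Omega \cap B_\eps(z_x) = \{(h',h_d) \in B_\eps : h_d < \phi(h')\}$ for a $C^1$ function $\phi$ with $\phi(0) = 0$, $\nabla\phi(0) = 0$. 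By compactness of $\partial\Omega$ the modulus of continuity $\omega$ of the resulting finite family of $\phi$'s can be taken uniform.

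\textbf{Pointwise asymptotics and dimension collapse.} Substituting $h = y - x$ in \eqref{eq:def-Nmu}, I approximate $(\Omega - x) \cap B_\eps$ by the half-ball $B_\eps^{-t}$ (their symmetric difference lies in a slab of thickness $\eps \omega(\eps)$) and replace $F(x+sh)$ by $F(z_x)$. Both replacements produce pointwise errors of order $1 + \omega(\eps)/\eps$ and therefore contribute $O(\eps + \omega(\eps)) = o(1)$ once integrated over the $O(\eps)$-volume set $U_\eps$. For the surviving main term, the radial symmetry of $\aeps\meps = a_d\eps^{-d-2}\mathbbm{1}_{B_\eps}$ kills the tangential components $(h_1, \ldots, h_{d-1})$, leaving only the $h_d$-contribution, and the remaining scalar integral is by construction exactly $k_\eps(t)$. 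This yields
\[
\Nop f_\eps(x) = k_\eps(t)\, F(z_x)\cdot\nv(z_x) + \eta_\eps(x), \qquad \int_{\Omega^c}|\eta_\eps(x)|\intd x = o(1).
\]
Changing variables to $(z,t)$ and invoking $\int_0^\eps k_\eps(t)\intd t = 1$ from \autoref{lemma:approx_id}, together with the continuity of $F\cdot\nv$ on $\partial\Omega$, produces
\[
\int_{\Omega^c} \Nop f_\eps(x) \intd x \to \int_{\partial\Omega} F(z)\cdot\nv(z)\intd\sigma_{d-1}(z).
\]

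\textbf{Main obstacle.} The delicate part is the uniform pointwise asymptotic for $\Nop f_\eps(x)$: one must simultaneously control the asymmetric perturbation of $\Omega$ off its tangent half-space at $z_x$ (quantified by the uniform modulus $\omega$, which is the source of the $\omega(\eps)/\eps$ term and is only summable after averaging over the thin tubular neighborhood) and the error from freezing $F$ at $z_x$, all while exploiting the radial symmetry of $\aeps\meps$ to cancel the $d-1$ tangential components of $F(z_x)$ and isolate the normal component. Once these estimates are in place, the dimension collapse from a $d$-dimensional integral on $U_\eps$ to a $(d-1)$-dimensional integral on $\partial\Omega$ is immediate from the approximate identity property of $k_\eps$.
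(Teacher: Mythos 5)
Your overall strategy — localize to a tubular neighborhood, isolate the $h_d$-component by radial symmetry, recognize the scalar integral as $k_\eps$, and let the approximate identity produce the dimension collapse — is the same as the paper's. The paper also rotates so that $\nv(z_x) \mapsto e_d$, straightens the boundary, splits off two error terms that vanish after integration over the $O(\eps)$-thick neighborhood, and is left with the factor $F(x)\cdot\nv(z_x)\,h_d$, which feeds into \autoref{lemma:approx_id}.

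The genuine gap is your change of variables $x \leftrightarrow (z_x, t)$. You assert that for $\eps$ small enough every $x \in U_\eps$ has a \emph{unique} nearest point $z_x \in \partial\Omega$ and that the normal map has Jacobian $1 + O(\eps)$ uniformly. Both claims are false for merely $C^1$ domains: uniqueness of the nearest boundary point (positive reach) is a $C^{1,1}$ phenomenon, and for $C^1$ domains the set of points with multiple minimizers can accumulate at $\partial\Omega$ (the paper explicitly flags this, citing \cite[Section 4]{KP81}); moreover the normal-map Jacobian involves principal curvatures, which do not exist for $C^1$ boundaries, so "$1+O(\eps)$'' is not even defined. Thus your coarea step, which is where the surface measure $\sqrt{1+|\nabla\gamma|^2}$ must materialize, cannot be carried out as stated. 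The paper avoids the normal map entirely: it parametrizes the tubular collar by graph coordinates $(x', x_d)$ and substitutes $w_d = x_d - \gamma(x')$ (vertical height above the graph), which is a genuine change of variables for $C^1$ domains. The surface measure factor then enters not through a Jacobian of a normal map but through the limit $\frac{|x_d - \gamma(x')|}{|x - z_x|} \to \sqrt{1 + |\nabla\gamma(x')|^2}$ as $x_d \to \gamma(x')$ (from \cite[Lemma 4.1]{GH22}), combined with the observation that $z_x$ — whichever minimizer one takes — converges to $(x', \gamma(x'))$ as $w_d \to 0$, so non-uniqueness is harmless in the limit. To repair your argument for $C^1$ domains you would either need to adopt this graph-coordinate substitution, or restrict to $C^{1,1}$ domains where normal coordinates are available.

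A smaller point: your pointwise error bound "of order $1 + \omega(\eps)/\eps$'' should read $O(\omega(\eps)/\eps)$ for the half-ball replacement (the symmetric difference of $(\Omega-x)\cap B_\eps$ and $B_\eps^{-t}$ has volume $\lesssim \eps^d\omega(\eps)$, and the kernel scales like $\eps^{-d-1}$, while $f_\eps$ is $O(1)$ on the support), which integrates to $O(\omega(\eps)) = o(1)$ over $U_\eps$; the freezing of $F$ contributes $O(\eps)\cdot\eps^{-1}$ pointwise, again $o(1)$ after integration. These estimates are fine once properly phrased, and match the paper's treatment of terms \eqref{eqn:term1}--\eqref{eqn:term2}.
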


An interesting consequence is the following. By combining the local and nonlocal divergence theorems with \autoref{prop:div_conv}, we can extend \autoref{thm:normal_conv} to general families $(\aeps)_\eps$ and $(\m_\eps(h) \intd h)_\eps$ that satisfy conditions (\ref{eq:seq-levy1}) and (\ref{eq:seq-levy2}). More specifically, there holds:

\begin{cor}\label{cor:normal_conv_general}
	Assume $\Omega$ to be a bounded $C^1$-domain. Let $(\aeps)_{\eps\in(0,1)}$ and $(\m_\eps (h) \intd h)_{\eps \in (0, 1)}$ be families of radial functions resp. symmetric measures satisfying (\ref{eq:seq-levy1}) and (\ref{eq:seq-levy2}). 
	Let $F \in C^1(\overline\Omega; \Rd)$ and $(f_\eps: \Rtd \to \R)_{\eps \in (0,1)}$ be a corresponding family of nonlocal vector fields with respect to $(\aeps)_\eps$ in the sense of (\ref{eq:nonloc_vf}).
	Then
	\begin{align*}
		\int_\Omega \Nop f_\eps (x) \intd x \to \int_{\partial\Omega} F(x) \cdot \nv(x) \intd \sigma_{d-1} (x)
	\end{align*}
	as $\eps \to 0$.
\end{cor}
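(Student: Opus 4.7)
The plan is to chain together three ingredients that are already at our disposal: the nonlocal divergence theorem \autoref{prop:nonlocal_div}, the convergence of the nonlocal divergence integral to the classical divergence integral \autoref{prop:div_conv}, and the classical divergence theorem \eqref{eq:div-theo} itself. At this point in the paper, \eqref{eq:div-theo} has already been derived by combining \autoref{prop:div_conv} and \autoref{thm:normal_conv} for the specific family \eqref{eq:case-eps}; the logical order is therefore fine, and I am free to invoke \eqref{eq:div-theo} on the general families considered in the corollary.

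First I would verify that the nonlocal divergence theorem is applicable to $f_\eps$ for every $\eps \in (0,1)$. The property \eqref{eq:seq-levy1} immediately implies that the pair $(\aeps,\meps)$ is admissible in the sense of \eqref{eq:assum_alpha-mu}, so that \autoref{lem:well-posedness-two} provides the pointwise existence of $\divop f_\eps(x)$ on $\Omega$. Since $F$ is extended to a compactly supported $C^1$ vector field on $\R^d$, the function $f_\eps$ is antisymmetric and bounded, and the double integral appearing in the proof of \autoref{prop:nonlocal_div} is absolutely convergent by the same splitting into $B_1$ and $B_1^c$ used in \autoref{lem:well-posedness-two}. In particular, \autoref{prop:nonlocal_div} yields
\begin{equation*}
\int_\Omega \divop f_\eps(x) \intd x = \int_{\Omega^c} \Nop f_\eps(x) \intd x
\end{equation*}
for every $\eps \in (0,1)$.

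With this identity in hand, the corollary is essentially a one-liner. \autoref{prop:div_conv} sends the left-hand side to $\int_\Omega \diver F(x) \intd x$ as $\eps \to 0^+$, and the classical divergence theorem \eqref{eq:div-theo} rewrites this limit as $\int_{\partial\Omega} F(x) \cdot \nv(x) \intd \sigma_{d-1}(x)$. Chaining the three equalities (nonlocal identity, divergence convergence, classical Gauss--Green) yields
\begin{equation*}
\lim_{\eps\to 0^+} \int_{\Omega^c} \Nop f_\eps(x) \intd x
= \lim_{\eps \to 0^+} \int_\Omega \divop f_\eps(x) \intd x
= \int_\Omega \diver F(x) \intd x
= \int_{\partial\Omega} F(x) \cdot \nv(x) \intd \sigma_{d-1}(x),
\end{equation*}
which is the claim.

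I do not foresee any genuine obstacle; the only delicate point is the bookkeeping of the logical order. The previous results let us convert the nontrivial normal-derivative convergence, proved only for the bounded localizing kernels \eqref{eq:case-eps}, into a statement for arbitrary L\'evy-type families by pushing the problem onto the divergence side, where \autoref{prop:div_conv} already handles arbitrary $(\aeps,\meps)$ satisfying \eqref{eq:seq-levy1}--\eqref{eq:seq-levy2}.
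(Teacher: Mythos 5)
Your proposal is correct and reproduces the paper's own argument: the paper explicitly frames this corollary as "combining the local and nonlocal divergence theorems with \autoref{prop:div_conv}," which is precisely your chain of \eqref{eq:div-nonloc}, \autoref{prop:div_conv}, and \eqref{eq:div-theo}. You also correctly flag that \eqref{eq:div-theo} is already available at this stage (having been established via the specific family \eqref{eq:case-eps} in \autoref{thm:normal_conv}), so there is no circularity, and you note in passing that the integral in the corollary's display should read $\int_{\Omega^c}$ rather than $\int_\Omega$.
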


We provide the proof for bounded $C^1$-domains. At the end of the section, we explain how our method extends to more general domains with a boundary that consists of finitely many $C^1$-parts. 

\begin{proof}[Proof of \autoref{thm:normal_conv}]
We begin by localizing. As usual, write $x = (x', x_d) \in \R^{d-1}\times\R$. 
By assumption, $\Omega$ is a $C^1$-domain, so for any $r > 0$ and $z_0 \in \partial\Omega$, we find a cube $\hat Q_r$ centered at $z_0$ of radius $r$, a rotation and translation $\phi: \R^d \to \R^d$, and a $C^1$-function $\gamma: \R^{d-1} \to \R$ such that, if $Q_r = \phi(\hat Q_r)$, 
\begin{alignat*}{1}
	&Q_r^- \defeq \{ x \in Q_r \mid \gamma(x') > x_d \} = \phi(\hat Q_r \cap \Omega),\\
	&Q_r^+ \defeq \{ x \in Q_r \mid \gamma(x') \leq x_d \} = \phi(\hat Q_r \cap \Omega^c),
\end{alignat*}
as well as $\Gamma_\gamma \cap Q_r = \phi(\hat Q_r \cap \partial\Omega)$, where $\Gamma_\gamma$ signifies the graph of $\gamma$. 

We now choose a finite subcover  $\hat Q_\rho^{(1)}, \dots, \hat Q_\rho^{(N)}$ of $\partial \Omega$ with the following properties. 
\begin{enumerate}
	\item \label{def:cover-rotation}
		The outward unit normal vector in each cube does not oscillate too much. Specifically, we require for any $z, w \in \hat Q_r \cap \partial\Omega$ that $\arccos(z \cdot w) \in [0, \pi/4)$. 
	\item \label{def:cover-minimizer}
		Each point in a smaller cube $\hat Q_\rho$ with $\rho<r$ has a minimizer in $\hat Q_r \cap \partial \Omega$. 
\end{enumerate} 
The outward unit normal vector field $\nv: \partial \Omega \to \mathbb S^{d-1}$ of a $C^1$-domain is continuous, and, since $\partial \Omega$ is compact, even uniformly. We thus choose $r > 0$ such that property \eqref{def:cover-rotation} holds.
Property \eqref{def:cover-minimizer} is obtained by simply choosing $\rho < r/2$.  
Now, $\partial\Omega$ can be covered by finitely many cubes $\hat Q_\rho^{(1)}, \dots, \hat Q_\rho^{(N)}$, again by compactness. By choosing $\eps_0 < r/2$ small enough, we can even guarantee for any $\eps \in (0, \eps_0$) that $\bigcup_{i=1}^N \hat Q_\rho^{(i)} \supset \Omega^c_\eps \defeq \{ x \in \Omega^c \mid \dist(x, \partial\Omega) < \eps \}$, as well as $B_\eps(x) \subset \hat Q^{(i)}_r$ for any $x \in \hat Q_\rho^{(i)}$.

From now on, let $\eps \in (0, \eps_0)$. 
It suffices to consider a localized version of the problem, i.e. we show for any $i \in \{1, \dots, n\}$
\begin{align}\label{eq:NC-loc}
	\lim_{\eps \to 0} \int_{\hat Q^{(i)}_\rho \cap \Omega^c_\eps} \mathcal N_{\eps} f_{\eps}(x) \intd x 
    = \int_{\hat Q^{(i)}_\rho \cap \partial\Omega} F(x) \cdot \nv(x) \intd \sigma_{d-1}(x) \,.\tag{NC-loc}
\end{align}
We then choose a partition of unity $(\eta_i)_{i= 1, \dots, N}$ of $\Omega_\eps^c$ subordinate to the covering $\hat Q^{(1)}_\rho, \dots, \hat Q^{(N)}_\rho$. Carefully following the proof of \eqref{eq:NC-loc} reveals that it implies 
\begin{align*}
	\lim_{\eps \to 0} \int_{\Omega_\eps^c} \mathcal N_{\eps} f_{\eps}(x) \eta_i(x) \intd x 
    = \int_{\partial\Omega} \big(F(x) \cdot \nv(x)\big) \eta_i(x) \intd \sigma_{d-1}(x) \,,
\end{align*}
for any $i \in \{1, \dots, N\}$. Thus, by using the definition of the kernel \eqref{eq:case-eps} and the fact that $\sum_{i=1}^N \eta_i(x) = 1$ pointwise for any $x \in \Omega_\eps^c$, we obtain \eqref{eq:normal_conv}.

In order to prove \eqref{eq:NC-loc}, let us consider a single cube $\hat Q$ in the cover of $\Omega_\eps^c$. We write $Q \defeq (-\rho, \rho)^d$ for the cube $\phi(\hat Q)$ and assume, for ease of notation and without loss of generality, that $\Omega$ is shifted and rotated in such a way that $Q^- = \Omega \cap Q$, $Q^+ = \Omega^c \cap Q$, and $\partial\Omega = \Gamma_\gamma \cap Q$. Setting $Q_\eps^+ \defeq \{ x \in Q^+ \mid \dist(x, \partial\Omega) < \eps \}$, we may also assume that for any $x' \in [-\rho, \rho)^{d-1}$, we have $\{x'\} \times (\gamma(x'), \gamma(x') + \eps) \subset Q_\eps^+.$

For $x \in Q_\eps^+$, let $z_x \in \partial\Omega$ be a point minimizing the distance of $x$ to $\partial\Omega$.
Note that there might not be a unique minimizer for every $x \in Q^+_\eps$, no matter how small we choose $\eps$ (see \cite[Section 4]{KP81}). However, this won't pose a problem, since every sequence of minimizers for $x$ converges to the same point as $x$ approaches the boundary.  
Consider the normed vector $\nv(z_x) \defeq \frac{x-z_x}{|x-z_x|}$. 
By construction, this is the outward unit normal vector to $\partial\Omega$ at $z_x$, which exists everywhere since $\partial\Omega$ is $C^1$. 
There is a rotation $R_x = (r^{(x)}_{i, j})_{i, j = 1, \dots d} \in \mathcal O^+(\R^d)$ rotating $\nv(z_x)$ to $e_d = (0, \dots, 0, 1)^T$, i.e. $R_x\nv (z_x) = e_d$.
Observe that, since $R_x$ is orthogonal, we have that $R_x^{-1}e_d = R_x^Te_d = (r^{(x)}_{d, 1}, \dots, r^{(x)}_{d, d})^T = \nv(z_x)$. 
Moreover, since the normal vector does not oscillate too much in $Q$, $R_x(\Gamma_\gamma \cap Q)$ is still the graph of a $C^1$-function.

With this setup, we begin to calculate: 
\begin{align*}
    &\int_{Q^+} \mathcal N_{\eps} f_{\eps}(x) \intd x\\
    =& -2 \int_{Q^+} \int_\Omega f_{\aeps}(x, y) \m_\eps(y-x) \intd y \intd x\\
	=&- 2 \int_{Q^+} \int_\Omega 
	    \int_0^1 F\big(x + t(y-x)\big) \cdot (y - x) \intd t \;
        \aeps(y-x) \m_\eps(y-x) \intd y \intd x\\
	=&-2 a_d \eps^{-d-2}
	    \int_{Q^+_\eps} \int_{(\Omega-x) \cap B_\eps(0)}
	    \int_0^1 F(x + th) \cdot h \, \intd t \intd h \intd x\\
	=& -2 a_d \eps^{-d-2}
	    \int_{Q^+_\eps} \int_{R_x(\Omega-x) \cap B_\eps(0)}
	    \int_0^1 F(x + t R_x^T h) \cdot R^T_x h \, \intd t \intd h \intd x\\
	=& -2 a_d \eps^{-d-2} 
		\int_{Q^+_\eps} \int_{B_\eps^{-|x-z_x|}} 
		\int_0^1 F\big(x + t R^T_x \psi_x^{-1}(h)\big) \cdot R^T_x \psi_x^{-1}(h) \intd t \intd h \intd x
\end{align*}

In the fourth equality, we have used a change of variables with $h \defeq y-x$ as well as the fact that, by definition, $\aeps$ and $\meps$ have support in $B_\eps(0)$. Note that the change of variables shifts $z_x$ to $z_x-x$ and $R_x(z_x-x) = -R_x^T\nv(z_x) |x-z_x|= -e_d |x-z_x|$. 
 
In the fifth equality, we have straightened the boundary in the following way. 
Abusing notation slightly, let us also call $\gamma$ the $C^1$-function on $R_x(\Omega-x)\cap B_\eps(0)$ that represents $\partial \Omega$. Define a $C^1$-function 
\begin{align*}
	\psi_x: V_x \to \Rd, (h', h_d) \mapsto (h', h_d - \gamma(h') - |x - z_x|),
\end{align*}
in a neighbourhood $V_x$ of $(0, \dots, 0, -|x-z_x|)$ that is a diffeomorphism onto its image. From now on, we assume $\eps$ to be small enough such that $B_\eps^{-|x-z_x|} \subset V_x$, where $B_\eps^{-|x-z_x|} = \{h \in B_\eps(0) \mid h_d < -|x-z_x| \}$ as in the notation introduced above.
The inverse of the transformation $\psi_x$ is given by $(h', h_d) \mapsto (h', h_d+\gamma(h') + |x - z_x|)$. 
Since $\psi_x \big (0, \dots, 0, -|x-z_x|) \big) = (0, \dots, 0, -|x-z_x|)$, $\psi_x$ straightens the boundary to a hyperplane orthogonal to $e_d$ through $(0, \dots, 0, -|x-z_x|)$, which leaves us to integrate over $B_\eps^{-|x-z_x|}$. 
Observing that $\det(D \psi_x^{-1}) = 1$, we obtain the fifth equality.

Since each transformation $\psi_x^{-1}$ is continuous and bounded on $B_\eps$ independently of $x$, taking it into account only produces an error term that vanishes as $\eps \to 0$. Thus we may and do refrain from explicitly mentioning $\psi_x^{-1}$ in the following calculations. 
Therefore, after rotating and straightening, we have, ignoring the constants $a_d$ and $\eps^{-d-2}$:
\begin{alignat}{1}
    &-2 \int_{Q^+_\eps} \int_{B_\eps^{-|x - z_x|}} 
        \int_0^1 \int_0^1 F(x + t R^T_x h) \cdot R^T_x h  \cdot R^T_x h 
        \,\intd t \intd h \intd x\nonumber\\ 
	=&- \int_{Q^+_\eps} \int_{B_\eps^{-|x - z_x|}}
	    \int_0^1 F\big(x + tR^T_x(h', h_d)\big)\cdot R^T_x(h', h_d)
	    \,\intd t \intd h \intd x\nonumber\\
    &-  \int_{Q^+_\eps} \int_{B_\eps^{-|x - z_x|}}
	    \int_0^1 F \big(x+tR^T_x(-h', h_d)\big)\cdot R^T_x(- h', h_d)
	    \,\intd t \intd h \intd x\nonumber\\
	=&- \int_{Q^+_\eps} \int_{B_\eps^{-|x - z_x|}}
	    \int_0^1 [F\big(x + t R^T_x(h', h_d)\big)-F(x)]\cdot R^T_x(h', h_d)
	    \,\intd t \intd h \intd x\label{eqn:term1}\\
    & - \int_{Q^+_\eps} \int_{B_\eps^{-|x - z_x|}}
	    \int_0^1 [F \big(x+t R^T_x(-h', h_d)\big)-F(x)]\cdot R^T_x(- h', h_d)
	    \,\intd t \intd h \intd x\label{eqn:term2}\\
	& - \int_{Q^+_\eps} \int_{B_\eps^{-|x - z_x|}}
	    F(x)\cdot R^T_x(h', h_d) + F(x)\cdot R^T_x(- h', h_d)
	    \,\intd h \intd x\label{eqn:term3}
\end{alignat}
Here, due to the straightening, we were able to use the symmetry of the integral in the variables $h_1, \dots, h_{d-1}$. 

The first two terms can be treated in the following way. 
The fundamental theorem of calculus yields, for arbitrary $h \in \Rd$, $[F\big(x + th)\big) - F(x)]\cdot h = \int_0^1 [DF(x+sth)h]\cdot th \intd s$. 
Since $F \in C_b^1(\Rd; \Rd)$, we have $|DF(h)| \leq ||F||_{C_b^1(\Rd)}$, again for any $h$.
Using these two observations and putting absolute values, we estimate (\ref{eqn:term1}). Uneventful positive constants are collected under $c$, which may change from line to line. 
We obtain
\begin{alignat*}{1}
    c \: \eps^{-d-2} \int_{Q^+_\eps} \int_{B_\eps^{-|x-z_x|}} 
        &\int_0^1 \int_0^1 [DF(x+ st R^T_x h) R^T_x h ] \cdot t R^T_x h 
        \,\intd s \intd t \intd h \intd x\\
    \leq &\; c \: \eps^{-d-2} ||F||_{C_b^1(\Rd)}
        \int_{(-\rho, \rho)^{d-1}}\int_{\gamma(x')}^{\gamma(x')+\eps} \int_{B_\eps} |h|^2 
        \,\intd h \intd x_d \intd x'\\
    = &\; c \; \eps^{-d-2} ||F||_{C_b^1(\Rd)} 
        \int_{(\rho, \rho)^{d-1}} \int_{\gamma(x')}^{\gamma(x')+\eps} \eps^{d+2}
        \,\intd x_d \intd x'\\
    = &\; c \; ||F||_{C_b^1(\Rd)} 
    		\int_{(\rho, \rho)^{d-1}} \eps \intd x' = \; c \; \eps.
\end{alignat*}
This vanishes for $\eps \to 0$. A similar calculation shows that (\ref{eqn:term2}) goes to zero as well. 

Now we consider (\ref{eqn:term3}), which will produce the inner product of the local vector field $F$ with the outward unit normal vector. 
Observe, first,
\begin{alignat*}{1}
	&  F(x) \cdot R^T(h', h_d) +  F(x) \cdot R^T(-h', h_d) \\
	=& \sum_{j=1}^{d} \sum_{i=1}^{d-1} r_{i, j} h_i F_j(x) 
			+ \sum_{j=1}^d r_{d, j} h_d F_j(x)
	- \sum_{j=1}^{d} \sum_{i=1}^{d-1} r_{i, j} h_i F_j(x) 
			+ \sum_{j=1}^d r_{d, j} h_d F_j(x)\\
	=& \; 2 \Big(\sum_{j=1}^d r_{d, j} F_j(x)\Big) h_d
	= 2 \big(F(x) \cdot \nv(z_x)\big) h_d.
\end{alignat*}
Moreover, for any $h \in B_\eps(0)$, we know that $h$ is, by definition, in $B_\eps^{-|x - z_x|}$ if and only if $h_d < -|x - z_x|$. This is equivalent to 
\[\frac{|x_d - \gamma(x')|} {|x-z_x|} \; h_d < -|x_d - \gamma(x')|.\]

We now bring these two observations to bear on the third term, (\ref{eqn:term3}): 
\begin{alignat*}{1}
    &- 2 a_d \eps^{-d-2} 
        \int_{(-\rho, \rho)^{d-1}} \int_{\gamma(x')}^{\gamma(x')+\eps} \int_{B_\eps^{-|x - z_x|}}
	    \big(F(x) \cdot \nv(z_x)\big) h_d
	    \,\intd h \intd x_d \intd x'\\
	=&- 2 a_d \eps^{-d-2}  
		\int_{(-\rho, \rho)^{d-1}} \int_{\gamma(y')}^{\gamma(y')+\eps} 
		\big(F(x) \cdot \nv(z_x)\big)
		\frac {|x_d - \gamma(x')|}{|x - z_x|} 
		\int_{B_\eps^{-|x_d - \gamma(x')|}} h_d 
		\,\intd h \intd x_d \intd x'\\
	=&  \int_{(-\rho, \rho)^{d-1}}  
		\int_{0}^{\eps} 
		\big( F(x) \cdot \nv(z_x)\big)
		\frac {|w_d|}{|x - z_x|}
		\Big(- 2 a_d \eps^{-d-2} \; \int_{B_\eps^{-|w_d|}} h_d \,\intd h\Big) \intd w_d \intd x'
\end{alignat*}

Note that we did not explicitly write out the substitution $w_d \defeq x_d - \gamma(x')$ occurring in the variable $x$, which should of course be read as $x = (x', w_d + \gamma(x'))$. 

In \cite[Lemma 4.1]{GH22} it is proved that under conditions like ours, we have 
\[
	\frac {|x_d - \gamma(x')|}{|x - z_x|} \to \sqrt{ 1 + \;|\nabla \gamma(x')|^2} 
    \textrm{ as } x_d \to \gamma(x').
\]
This is the final ingredient we will need to wrap up the proof. 
Take 
\[
	g(x', w_d + \gamma(x')) \defeq 
	F(x', w_d + \gamma(x')) \cdot \nv(z_{(x', w_d + \gamma(x'))})
	\frac {|w_d|}{|(x', w_d + \gamma(x')) - z_{(x', w_d + \gamma(x'))}|}
\]
for a fixed $x' \in (-\rho, \rho)^{d-1}$ as a function in $w_d$.
For a sequence $(x', x_d^{(k)})_{k \in \N}$ of points in $\Omega^c$ such that $x_d^{(k)} \to \gamma(x')$, we know that any sequence of minimizers of their distance to the boundary, $(z_{(x', x_d^{(k)})})_{k \in \N}$, converges to $z_{(x', \gamma(x'))} = (x', \gamma(x')) \in \partial\Omega$ as $k \to \infty.$
Since $F$ and the normal vector $\nv(z_x)$ are continuous, we thus obtain 
    $$g(x', w_d + \gamma(x')) 
    \to F(x', \gamma(x')) \cdot \nv(x', \gamma(x')) \sqrt{ 1 + \;|\nabla \gamma(x')|^2}$$
as $w_d \to 0$. With the approximate identity 
$k_\eps(w_d) = - 2 a_d \eps^{-d-2} \; \int_{B_\eps^{-|w_d|}} h_d \intd h \; \mathbbm{1}_{(0, \eps)}(w_d)$
established in Lemma \ref{lemma:approx_id} and the fact that $g$ is bounded and continuous, we see that the limit for $\eps \to 0^+$ is precisely 
\begin{alignat*}{1}
	\int_{(-\rho, \rho)^{d-1}}& \lim_{\eps \to 0^+} 
        \int_\R k_\eps(w_d) g(x', w_d+\gamma(x')) \intd w_d \intd x' \\
	&= \int_{(-\rho, \rho)^{d-1}} g(x', \gamma(x')) \intd x' \\
	&= \int_{(-\rho, \rho)^{d-1}} 
	F(x', \gamma(x')) \cdot \nv(z_{(x', \gamma(x'))})
    \sqrt{ 1 + |\nabla \gamma(x')|^2 } \,\intd x'\\
	&= \int_{Q \cap \partial\Omega} 
	F(x) \cdot \nv(x)
	\intd\sigma_{d-1}(x).
\end{alignat*}	
\end{proof}

We have thus completed the proof of \eqref{eq:div-theo}.

\medskip

We now explain how to relax the assumptions on $\Omega$ in \autoref{thm:normal_conv}. We say that a bounded domain $\Omega$ is a \textit{piecewise $C^1$-domain} if its boundary is Lipschitz-continuous and the boundary $\partial \Omega$ can be decomposed into $N+1$ disjoint sets                    
	\begin{align*}
		\partial \Omega = G_1 \cup \ldots \cup G_N \cup B\,, 	 
	\end{align*}
such that $\mathcal H^{d-1}(B) = 0$, each set $G_i$ is connected, and the outward unit normal vector $\vec{n}|_{G_i}$ is uniformly continuous. 
For our proof, we need to impose a further regularity condition on $\partial \Omega$. Given a bounded domain $\Omega$, a set $M \subset \partial \Omega$ with outward unit normal vector $\vec{n}$, and $\eps > 0$, we define the \textit{exterior $\eps$-neighborhood of $M$} by 
\begin{align*}
M^{\eps} &= \{ x \in \Omega^c \, | \, x = z + t \vec{n}(z) \text{ for some } z \in  \partial \Omega, t \in (0, \eps) \} \,.
\end{align*}

\begin{defn}
	Let $\Omega$ be a piecewise $C^1$-domain in the above sense. We say that $\partial \Omega$ is \textit{accessible from the exterior} if for every $i \in \{1, \ldots, N\}$ and every $\delta > 0$, there is a set $G_{i, \delta} \subset G_i$ and $\eps_0 > 0$ with
	\begin{align*}
		\forall \; \eta \leq \delta  \;: \;  G_{i, \delta}  \subset G_{i, \eta} \text{ and }
		\forall \; i \ne j, \eps \in (0, \eps_0) \; : \; G_{i, \delta}^\eps \cap G_{j, \delta}^\eps = \emptyset\,,
	\end{align*}
	such that
	\begin{align}\label{eq:def-B-delta-eps}
		\bigcup_{\delta > 0}  G_{i, \delta} = G_i \text{ and }
		\lim\limits_{\delta \to 0} \lim\limits_{\eps \to 0}   \eps^{-1} \left| B(\delta,\eps) \right|  = 0\,,
	\end{align}
where $B(\delta,\eps) := \Omega^c_\eps \setminus \big(\bigcup_{i=1}^N  G_{i, \delta}^\eps \big)$\,.   
\end{defn}

For every $i$, the sets $G_{i, \delta}$ exhaust $G_i$ as $\delta \to 0$, and the neighborhoods $G_{j, \delta}^\eps$ and $G_{i, \delta}^\eps$ are disjoint if $\eps$ is sufficiently small. In this way, one can approximate the sets $G_i$ from the exterior $\Omega^c$ in a well-defined way.

Let us discuss some examples.

\begin{example}
(a) A cube in $\R^2$ is a piecewise $C^1$-domain, whose boundary is accessible from the exterior. For $\Omega = (-1,1) \times (-1,1) \subset \R^2$, one can choose $B$ as the set of the four corners and the sets $G_i$ as the four edges. Since $\Omega$ is convex, we can choose $G_{i, \delta} = G_i$ for every $\delta$. The value of $\eps_0$ can be chosen arbitrarily.\\
(b) For non-convex domains, the strength of the above definition shows. Let $\Omega = (-1,1) \times (-1,1)  \setminus \overline{(0,1) \times (0,1)}$ be an $L$-shaped domain in $\R^2$. All but two edges can be dealt with as in the case of the cube. The two edges $G_1$ connecting the points $(0,0)$ and $(0,1)$ resp. $G_2$ connecting  $(0,0)$ and $(1,0)$ are approximated by  $G_{1, \delta} = \{(0,x) | \, \delta < x < 1 \}$ resp. $G_{2, \delta} = \{(x,0) | \, \delta < x < 1 \}$ and $\eps_0$ can be chosen as $\delta/2$.\\
(c) Every compact simply connected set in $\R^3$ whose boundary is a simple closed polygon and whose interior is a Lipschitz domain is a piecewise $C^1$-domain whose boundary is accessible from the exterior.
\end{example}

Let us show that \autoref{thm:normal_conv} holds true for domains whose boundary is accessible from the exterior. 
The main idea is to decompose $\Omega^c_\eps$ for sufficiently small $\eps$ as follows. For every $\delta > 0$ there is $\eps_0 \in (0, 1)$ and a disjoint decomposition of the form
\begin{align*}
 \Omega^c_\eps = G_{1, \delta}^\eps \cup \ldots \cup G_{N, \delta}^\eps \cup B(\delta, \eps) \,,
\end{align*}
where $\eps \in (0, \eps_0)$.

Applying our proof of \autoref{thm:normal_conv}, we obtain for every $\delta > 0$,
\begin{align*}
	\lim\limits_{\eps \to 0} \int_{\Omega^c} \mathcal{N}_\eps f_\eps (x) \intd x =  \sum_{i=1}^N \int_{G_{i, \delta}} F(x) \cdot \vec{n}(x) \intd \sigma_{d-1} (x) +  \lim\limits_{\eps \to 0} \int_{B(\eps, \delta)} \mathcal{N}_\eps f_\eps (x) \intd x \,.
\end{align*}
The second term on the right-hand side converges to $0$ as $\delta \to 0$ due to property \eqref{eq:def-B-delta-eps} in the above definition. Considering the limit $\delta \to 0$, we obtain 
\begin{align*}
		\lim\limits_{\eps \to 0} \int_{\Omega^c} \mathcal{N}_\eps f_\eps (x) \intd x =  \sum_{i=1}^N \int_{G_{i}} F(x) \cdot \vec{n}(x) \intd \sigma_{d-1} (x) =  \int_{\partial \Omega} F(x) \cdot \vec{n}(x) \intd \sigma_{d-1} (x) \,.
\end{align*}	

This completes the proof of \autoref{thm:normal_conv} for domains whose boundary is accessible from the exterior.

\section{Fractional operators, perimeters, and the mean curvature revisited}\label{sec:frac-operators} 

In this section, we explain how one can use our notion of fractional divergence in \eqref{eq:def-frac-div-grad} to represent the fractional $p$-Laplace, the fractional perimeter, and the fractional mean curvature. For each of these concepts, the local definition is provided together with the fractional variant that is prevalent in the literature. Rewriting the fractional definitions using our concept of fractional divergence exhibits structural similarities between the fractional and local concepts. We hope that this provides independent motivation for our choices in the definitions \eqref{eq:case-frac} and \eqref{eq:def-frac-div-grad}.  

\subsection*{The fractional $p$-Laplace} 

For $s \in (0, 1)$ and $p \in [1, \infty)$, the fractional $p$-Laplace of a sufficiently regular function $u: \R^d \to \R$ is often defined via
\begin{align*}
-(-\Delta)^s_p u (x) &= (1-s) \operatorname{pv.} \int\limits_{\R^d} |u(y)-u(x)|^{p-2} \big(u(y) - u(x) \big) \frac{\intd y}{|y-x|^{d+sp}} \,.
\end{align*}
With our definition of the fractional divergence $\operatorname{div}^{(s)}$ in \eqref{eq:def-frac-div-grad}, we obtain
\begin{align*}
-(-\Delta)^s_p u (x) &= (1-s) \operatorname{pv.} \int\limits_{\R^d} \big(\tfrac{|u(y)-u(x)|}{|y-x|^s} \big)^{p-2} \tfrac{u(y) - u(x)}{|y-x|^s} \tfrac{\intd y}{|y-x|^{d+s}} \\
&= (1-s) \left( \frac{\mathcal H^{d-1}(\mathbb S^{d-1})}{4 d (1-s) s} \right) \operatorname{div}^{(s)} \big(|\nabla^{(s)} u|^{p-2} \nabla^{(s)} u \big) (x) \\
&= \frac{\mathcal H^{d-1}(\mathbb S^{d-1})}{4 d s}  \operatorname{div}^{(s)} \big(|\nabla^{(s)} u|^{p-2} \nabla^{(s)} u \big) (x) \,. 
\end{align*} 
Using our fractional divergence and gradient, it is thus possible to write the fractional $p$-Laplace like the local $p$-Laplace $\Delta_p u = \operatorname{div} \cdot (|\nabla u|^{p-2} \nabla u)$. Similar representations have been given in other works such as \cite{MaSch18, Sil20, ByKi23}.

If, in the case $p=2$, one uses the standard definition of the fractional Laplace operator
\begin{align*}
- (-\Delta)^s u (x) &= c_{d,s} \operatorname{pv.} \int\limits_{\R^d} \frac{u(y) - u(x)}{|y-x|^{d+2s}} \intd y 
\end{align*} 
with a constant $c_{d,s}$ that guarantees $\mathcal{F}\big((-\Delta)^s u\big) (\xi) = |\xi|^{2s} \mathcal{F}(u) (\xi)$, then one obtains 
\begin{align}\label{eq:frac-laplace-new}
(-\Delta)^s u (x) &=  \frac{c_{d,s} \mathcal H^{d-1}(\mathbb S^{d-1})}{4 d (1-s) s}  \operatorname{div}^{(s)} \nabla^{(s)} u (x) \,.
\end{align}
Then $\frac{c_{d,s} \mathcal H^{d-1}(\mathbb S^{d-1})}{4 d (1-s) s} \to 1 $ as $s \to 1$, a proof of which can be found in several sources, see e.g. \cite{Fog20} for an elegant proof. This asymptotic behavior of the normalizing constants further illustrates the naturalness of our definitions. 

\subsection*{The fractional perimeter} \
The perimeter of a Lebesgue-measurable set $E \subset \Rd$ has been defined by De Giorgi in the following way:
\begin{align}\label{def:classical-perimeter}
P(E) = \sup \Big\{ \int_E \operatorname{div} \varphi (x) \intd x \, \big| \, \varphi \in C^\infty_c(\R^d; \R^d), |\varphi| \leq 1 \Big\}
\end{align}
The notion of the fractional perimeter, which was introduced first in \cite{CRS10}, is given as:
\begin{align*}
\operatorname{Per}_s(E) = \frac 1 {|B^{d-1}|} \int_E \int_{E^c} \frac{\intd x \intd y}{|y-x|^{d+s}} \,, 
\end{align*}
where $|B^{d-1}|$ is the $(d-1)$-dimensional Lebesgue measure of the unit ball in $\R^{d-1}$. It is a well known fact that 
\begin{align}\label{eq:perimeter-conv}
(1-s)\operatorname{Per}_s(E) \to \operatorname{Per}(E) \text{ as } s \to 1-\, ,
\end{align}
for sets $E \subset \Rd$ of finite perimeter and finite Lebesgue measure. See \cite[Theorem 1]{CV2011}, \cite[Theorem 2]{ADPM11} for a proof of the general case, for the global case discussed here, see \cite{BBM01} and specifically \cite[Theorem 1]{Dav02}. 

Using the nonlocal divergence, we can provide an alternative definition of the fractional perimeter as in \cite[Definition 3.1]{AJS22} that is analogous to the classical one: 
\begin{align*}
P_s(E) \defeq \sup \Big\{ c_{d, s} \int_E \mathcal{D}_s f (x) \intd x \, 
\big| \, &f \in C^1(\Rtd) \text{ antisymmetric, }
|f| \leq 1 \Big\}
\end{align*}
Note that the conditions on the antisymmetric function $f$ are modeled after the definition of the classical perimeter stated in \eqref{def:classical-perimeter}. The constant $c_{d, s}$ provides the correct scaling. Given \eqref{eq:case-frac}, it is defined as 
\begin{align}\label{eq:perimeter-constant}
c_{d, s} \defeq \frac {\mathcal H^{d-1}(\mathbb S^{d-1})}{4ds |B^{d-1}|} = \frac{\Gamma(d/2 + 1/2)\sqrt{\pi}}{4s \Gamma(d/2 + 1)} \, ,
\end{align}
where $\Gamma$ is Euler's gamma function. 

We prove that the two quantities coincide, specifically we show: 
\begin{prop}\label{prop:perimeter-equivalence}
	Let $E \subset \Rd$ be open and bounded. There holds
	\begin{align}\label{eq:perimeter-equivalence}
	(1-s)\operatorname{Per}_s(E) = P_s(E) \text{ for any } s\in(0,1).
	\end{align}
\end{prop}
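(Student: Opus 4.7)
The strategy is to use the nonlocal divergence theorem (\autoref{prop:nonlocal_div}) together with the specific fractional kernel \eqref{eq:case-frac} to turn the quantity inside the supremum defining $P_s(E)$ into a genuine double integral over $E\times E^c$, and then to take the supremum over a suitable smooth approximation of the obvious maximizer. For any $f$ antisymmetric and $C^1(\Rtd)$ with $|f|\le 1$, the Fubini identity at the heart of the proof of \autoref{prop:nonlocal_div} gives $\int_E \operatorname{div}^{(s)} f(x)\intd x = 2\int_E \int_{E^c} f(x,y)\,\m_{1-s}(y-x)\intd y\intd x$, and an elementary computation from \eqref{eq:case-frac} and \eqref{eq:perimeter-constant} yields $2c_{d,s}\m_{1-s}(h) = (1-s)|B^{d-1}|^{-1}|h|^{-d-s}$. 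Combining these gives the key identity
\begin{align}\label{eq:perimeter-key-id}
c_{d,s}\int_E \operatorname{div}^{(s)} f(x)\intd x = \frac{1-s}{|B^{d-1}|}\int_E\int_{E^c}\frac{f(x,y)}{|y-x|^{d+s}}\intd y\intd x.
\end{align}

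The inequality $P_s(E)\le (1-s)\operatorname{Per}_s(E)$ is now immediate from \eqref{eq:perimeter-key-id} and the pointwise bound $|f|\le 1$. For the reverse inequality I would construct a family of test functions approximating the discontinuous natural maximizer $\mathbbm{1}_E(x)-\mathbbm{1}_E(y)$. Specifically, exhaust $E$ by open sets $E_n$ with $\overline{E_n}\subset E_{n+1}$ and $\bigcup_n E_n = E$, pick $\varphi_n\in C^\infty_c(E)$ with $\mathbbm{1}_{E_n}\le \varphi_n\le 1$ (via mollification of $\mathbbm{1}_{E_n}$), and set $f_n(x,y)\defeq\varphi_n(x)-\varphi_n(y)$. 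Each $f_n$ is smooth, antisymmetric, and bounded by $1$ in absolute value, hence admissible in the supremum defining $P_s(E)$. A second-order Taylor expansion gives $f_n(x,x+h)+f_n(x,x-h)=O(|h|^2)$, so the hypotheses of \autoref{lem:well-posedness-one} are satisfied and $\operatorname{div}^{(s)} f_n$ exists pointwise. Because $\varphi_n$ has compact support in $E$, the double integral on the right of \eqref{eq:perimeter-key-id} with $f=f_n$ is absolutely convergent and no further approximation is required in the Fubini step underlying \autoref{prop:nonlocal_div}.

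On $E\times E^c$ one has $f_n(x,y)=\varphi_n(x)\ge 0$ and $f_n(x,y)\to 1$ pointwise; Fatou's lemma applied to \eqref{eq:perimeter-key-id} therefore gives
\begin{align*}
P_s(E)\ge \liminf_{n\to\infty} c_{d,s}\int_E \operatorname{div}^{(s)} f_n(x)\intd x \ge \frac{1-s}{|B^{d-1}|}\int_E\int_{E^c}\frac{\intd y\intd x}{|y-x|^{d+s}} = (1-s)\operatorname{Per}_s(E),
\end{align*}
which combined with the upper bound proves \eqref{eq:perimeter-equivalence}. A pleasant feature of using Fatou here is that the cases $\operatorname{Per}_s(E)<\infty$ and $\operatorname{Per}_s(E)=+\infty$ are handled simultaneously. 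The main point that requires care is essentially bookkeeping: verifying that the constants collapse as claimed, and checking the integrability hypotheses for the $f_n$ so that the Fubini step is rigorous. Once these are in place, both halves of the equality are short.
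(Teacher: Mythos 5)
Your proof is correct and takes a genuinely different, cleaner route to the harder inequality. Both the paper and you establish the key identity $c_{d,s}\int_E \operatorname{div}^{(s)} f\intd x = \tfrac{1-s}{|B^{d-1}|}\int_E\int_{E^c} f(x,y)|y-x|^{-d-s}\intd y\intd x$ via the Fubini argument behind \autoref{prop:nonlocal_div} and the collapse of constants, from which $P_s(E)\le(1-s)\operatorname{Per}_s(E)$ is immediate. For the reverse inequality the paper approximates the natural maximizer by a tensor-product construction $g_\eps(x,y)=(\varphi_\eps\ast\mathbbm{1}_{E_\eps})(x)(\varphi_\eps\ast\mathbbm{1}_{E^c_\eps})(y)-(\varphi_\eps\ast\mathbbm{1}_{E^c_\eps})(x)(\varphi_\eps\ast\mathbbm{1}_{E_\eps})(y)$, then splits $E\times E^c$ into four regions and estimates each error term explicitly. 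You instead observe that the natural maximizer $\mathbbm{1}_E(x)\mathbbm{1}_{E^c}(y)-\mathbbm{1}_{E^c}(x)\mathbbm{1}_E(y)$ can equally well be written as the difference $\mathbbm{1}_E(x)-\mathbbm{1}_E(y)$, and approximate it by $f_n(x,y)=\varphi_n(x)-\varphi_n(y)$ with $\varphi_n\in C_c^\infty(E)$ squeezed between $\mathbbm{1}_{E_n}$ and $\mathbbm{1}_E$. The compact support of $\varphi_n$ inside $E$ gives positive distance to $E^c$, so the double integral is absolutely convergent without any further error analysis, and the $C^1$ regularity with the antisymmetric cancellation $f_n(x,x+h)+f_n(x,x-h)=O(|h|^2)$ makes $\operatorname{div}^{(s)} f_n$ well defined via \autoref{lem:well-posedness-one}. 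Since $f_n(x,y)=\varphi_n(x)\ge 0$ on $E\times E^c$ and $\varphi_n\to 1$ pointwise on $E$, Fatou closes the argument in one line and, as you note, handles $\operatorname{Per}_s(E)=+\infty$ for free. The trade-off is that the paper's computation gives quantitative decay rates $O(\eps^{1-s})$ for the error terms, whereas your argument produces no rate; but as a proof of the stated identity your route is shorter and conceptually tighter, and I see no gap in it.
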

This equivalence then immediately yields
\begin{align*}
P_s(E) \to \operatorname{Per}(E) \text{ as } s \to 1- \, ,
\end{align*}
for open and bounded sets of finite perimeter and Lebesgue measure, by \eqref{eq:perimeter-conv}.

\begin{proof}
	Observe first that, for any antisymmetric function $f: \Rtd \to \R$ with $|f| \leq 1$, there holds
	\begin{align*}
	c_{d, s} \int_E &\operatorname{div}^{(s)} f(x, y) \intd x \\
	&= \frac {\mathcal H^{d-1}(\mathbb S^{d-1})}{4ds |B^{d-1}|} \frac{2ds(1-s)}{\mathcal{H}^{d-1}(\mathbb{S}^{d-1})} 2 \int_E \int_{E^c} f(x, y) |y-x|^{-d-s} \intd y \intd x
	\leq (1-s)\operatorname{Per}_{s} (E),
	\end{align*}
	so $P_s(E) \leq (1-s)\operatorname{Per}_s(E)$.
	
	The converse inequality needs a bit more work. 
	Write $E_\eps \defeq \{ x \in E \mid \dist (x, \partial E) > \eps \}$ and $E^c_\eps \defeq \{ y \in E^c \mid \dist (y, \partial E) > \eps \}$.
	The second one is, perhaps, a slight abuse of notation. But the meaning in this proof will be unambiguous. 
	For any $s \in (0, 1)$, we construct a sequence of functions maximising $P_s(E)$. Obviously, a maximising function would be 
	\begin{align*}
	g(x, y) \defeq \mathbbm{1}_E (x) \mathbbm{1}_{E^c}(y) 
	- \mathbbm{1}_{E^c}(x) \mathbbm{1}_{E}(y),
	\end{align*}
	but this function is not permitted in the supremum that appears in the definition of $P_s(E)$. Instead, we approximate $g$ using a sequence of standard mollifiers $\varphi_\eps \in C_c^\infty (\Rd)$, i.e. functions satisfying 
	$\text{supp } \varphi_\eps \subset B_\eps(0)$,
	$0 \leq \varphi_\eps \leq 1$, and
	$\int_\Rd \varphi_\eps \intd x = 1$. 
	Moreover, we may assume $|\nabla \varphi_\eps | \leq c/\eps^{d+1}$ for some appropriate constant $c > 0$. 
	
	Now for any $\eps \in (0, 1)$, define
	\begin{align*}
	g_\eps (x, y) \defeq (\varphi_\eps \ast \mathbbm{1}_{E_\eps}) (x) (\varphi_\eps \ast \mathbbm{1}_{E^c_\eps})(y) 
	- (\varphi_\eps \ast \mathbbm{1}_{E^c_\eps})(x) (\varphi_\eps \ast \mathbbm{1}_{E_\eps})(y).
	\end{align*}
	For each $\eps \in (0,1)$, $g_\eps$ is smooth as well as antisymmetric, and $|g_\eps| \leq 1$. Thus, each $g_\eps$ can be used in the supremum that defines $P_s(E)$. 
	Given this setup, we consider the following integral:  
	\begin{align*}
	c_{d, s} \int_E \operatorname{div}^{(s)} g_\eps(x) \intd x
	= & \frac{(1-s)}{|B^{d-1}|} \int_E \int_{E^c} (\varphi_\eps \ast \mathbbm{1}_{E_\eps}) (x) (\varphi_\eps \ast \mathbbm{1}_{E^c_\eps})(y) |y - x|^{-d-s} \intd y \intd x
	\end{align*}
	We split the integral over $E \times E^c$ into four parts,  
	\begin{align}\label{eq:perimeter-split}
	(E_{2\eps} \times E^c_{2\eps})
	\cup (E_{2\eps} \times E^c \setminus E^c_{2\eps})
	\cup (E \setminus E_{2\eps} \times E^c_{2\eps})
	\cup (E\setminus E_{2\eps} \times E^c \setminus E^c_{2\eps}),
	\end{align}
	which we treat separately. 
	
	The first set contains the good part. For $x \in E_{2\eps}$, the convolution evaluates to 
	\begin{align*}
	\int_{B_\eps(x)} \varphi_\eps(x - z) \mathbbm{1}_{E_\eps} (z) \intd z = 1 ,
	\end{align*}
	since $\mathbbm{1}_{E_\eps} (z) = 1$ for any $x \in E_{2\eps}$ and $z \in B_\eps(x)$. The same applies for the convolution term involving $y \in E^c_{2\eps}$. 
	Hence 
	\begin{align*}
	&\frac{(1-s)}{|B^{d-1}|} \int_{E_{2\eps}} \int_{E_{2\eps}^c} (\varphi_\eps \ast \mathbbm{1}_{E_\eps}) (x) (\varphi_\eps \ast \mathbbm{1}_{E^c_\eps})(y) |y - x|^{-d-s} \intd y \intd x\\
	= &\frac{(1-s)}{|B^{d-1}|} \int_{E_{2\eps}} \int_{E_{2\eps}^c} |y - x|^{-d-s} \intd y \intd x,
	\end{align*}
	which converges to $(1-s) \operatorname{Per}_s(E)$ for $\eps \to 0$ by monotone convergence.
	
	We will now argue that the integrals over the remaining three terms in \eqref{eq:perimeter-split} all vanish. It is therefore no problem to ignore the constants in front of the integral, since they only depend on $s$ and $d$. We will omit them in the following arguments. The integrals over the middle two sets in \eqref{eq:perimeter-split} can be dealt with in the same way. They are both well-behaved because $x$ and $y$ cannot get arbitrarily close to each other. We can always estimate the convolution terms by 1, which we readily do, to obtain
	\begin{align*}
	&\int_{E\setminus E_{2\eps}} \int_{E_{2\eps}^c} (\varphi_\eps \ast \mathbbm{1}_{E_\eps}) (x) (\varphi_\eps \ast \mathbbm{1}_{E^c_\eps})(y) |y - x|^{-d-s} \intd y \intd x\\
	\leq & \int_{E\setminus E_{2\eps}} \int_{E_{2\eps}^c} |y - x|^{-d-s} \intd y \intd x.
	\end{align*}
	For any $x \in E\setminus E_{2\eps}$, there holds $E_{2\eps}^c \subset B_{2\eps}(x)^c$. This implies 
	\begin{align*}
	&\int_{E\setminus E_{2\eps}} \int_{E_{2\eps}^c} |y - x|^{-d-s} \intd y \intd x
	\leq \int_{E\setminus E_{2\eps}} \int_{B_{2\eps}(x)^c} |y-x|^{-d-s} \intd y \intd x\\
	= & \int_{E\setminus E_{2\eps}} \int_{B_{2\eps}(0)^c} |h|^{-d-s} \intd h \intd x
	= \int_{E\setminus E_{2\eps}} \mathcal{H}^{d-1}(\Sd) \int_{2\eps}^\infty r^{-d-s+d-1} \intd r \intd x\\
	= & \; \frac{\mathcal{H}^{d-1}(\Sd)}{s} \int_{E\setminus E_{2\eps}} (2\eps)^{-s} \intd x
	\leq c \eps^{1-s}.
	\end{align*}
	For the last inequality, we have used that $E$ is bounded, whence $E \setminus E_{2\eps}$ can be estimated by some constant that scales like $\eps$. Since $s \in (0, 1)$, we have $1-s > 0$, so the whole term vanishes for $\eps \to 0$. 
	
	The integral over the third set in \eqref{eq:perimeter-split} follows in the same manner with an application of Fubini-Tonelli.  
	
	Now we calculate the integral over the last set in \eqref{eq:perimeter-split}. The delicate part is the one where $x$ and $y$ are close together, so we split the domain of integration of the inner integral once more. For each $x \in E\setminus E_{2\eps}$, we consider $(E^c \setminus E^c_{2\eps}) \cap B_{2\eps}(x)^c$ and $(E^c \setminus E^c_{2\eps}) \cap B_{2\eps}(x)$. 
	In the first domain of integration, we can proceed as in the two foregoing cases to show that the term vanishes as $\eps \to 0$. 
	
	Before we evaluate the second domain of integration, we make some observations. For $y \in E^c \setminus E^c_{2\eps}$, we have 
	$\int_{B_\eps (x)} \varphi_\eps (y-z) \mathbbm{1}_{E_\eps}(z) \intd z = 0$.
	Indeed, since $\mathbbm{1}_{E_\eps}(z) = 1$ iff $z \in E_\eps$ and $y \in E^c$, we have that $|y - z| > \eps$, so $\varphi_\eps(y-z) = 0$. The analogous case for  $(\varphi_\eps \ast \mathbbm{1}_{E^c_\eps})(y)$  naturally holds as well. 
	
	With first-order Taylor expansion, we can thus estimate the convolution terms as follows:
	\begin{align*}
	\int_{B_\eps(x)} \varphi_\eps (x-z) \mathbbm 1_{E_\eps}(z) \intd z
	&= \int_{B_\eps(x)} \varphi_\eps (x-z) - \varphi_\eps(y-z) \mathbbm 1_{E_\eps}(z) \intd z \\
	&\leq \int_{B_\eps(x)} \nabla \varphi_\eps (z_x) \cdot (x - z - y + z) \intd z \\
	&\leq \int_{B_\eps(x)} |\nabla \varphi_\eps (z_x)| |x - y| \intd z \\
	&\leq \eps^d |B_1| \frac c {\eps^{d+1}} |y-x| = \frac c \eps |y-x|
	\end{align*}
	In the last line, we have employed the fact that $|\nabla \varphi_\eps| \leq c/\eps^{d+1}$. One obtains analogously
	\begin{align*}
	\int_{B_\eps(y)} \varphi_\eps (y-z) \mathbbm 1_{E^c_\eps}(z) \intd z 
	\leq \frac c \eps |y-x|. 
	\end{align*}
	We use these estimates to evaluate the integral over the last remaining domain of integration:
	\begin{align*}
	&\int_{E \setminus E_{2\eps}} \int_{(E^c \setminus E^c_{2\eps}) \cap B_{2\eps}(x)}
	(\varphi_\eps \ast \mathbbm{1}_{E_\eps}) (x) (\varphi_\eps \ast \mathbbm{1}_{E^c_\eps})(y) |y - x|^{-d-s} \intd y \intd x\\
	\leq & \frac c {\eps^2} \int_{E \setminus E_{2\eps}} \int_{(E^c \setminus E^c_{2\eps}) \cap B_{2\eps}(x)} |y-x|^{2-d-s} \intd y \intd x\\
	\leq & \frac c {\eps^2} \int_{E \setminus E_{2\eps}} \mathcal H^{d-1}(\mathbb S^{d-1}) \int_0^{2\eps} r^{1-s} \intd r \intd x\\
	\leq & \frac c {\eps^2} \int_{E \setminus E_{2\eps}} \frac {\mathcal H^{d-1}(\mathbb S^{d-1})} {(2-s)} (2\eps)^{2-s} \intd x
	= \frac c {\eps^2} c \eps (2\eps)^{2-s} = c \eps^{1-s}
	\end{align*}
	Note that we have used again the boundedness of $E$ to estimate $|E \setminus E_{2\eps}|$ by $c \eps$ for some appropriate constant $c$. 
	Putting everything together yields \eqref{eq:perimeter-equivalence}. 
\end{proof}

\subsection*{The fractional mean curvature} 
Classically, several equivalent definitions have been put forward for the mean curvature of a set $E \subset \Rd$ with sufficiently regular boundary. One prominent way is to define it as the divergence of the unit normal vector field at the boundary.
In the nonlocal setting, the fractional (or nonlocal) mean curvature has been introduced as the first variation of the fractional perimeter, see \cite{CRS10}. Given $x \in \partial E$, the fractional mean curvature is defined for $s \in (0, 1)$ by
\begin{align*}
H_s(x; E) = \frac{1}{|B^{d-1}|} \operatorname{pv.} \int_{\R^d} \big( \mathbbm{1}_{E^c}(y) - \mathbbm{1}_{E}(y) \big) |y-x|^{-d-s} \intd y
\end{align*}
Note that this definition is well-posed, see \cite[Lemma 7]{AbVa14} and \cite[Cor. 3.5]{Coz15}. Like the fractional perimeter, the fractional mean curvature, too, converges to the classical mean curvature as $s \to 1-$, see \cite[Lemma 9]{CV2013} or \cite[Theorem 12]{AbVa14}. If $H(x; E)$ is the classical mean curvature at $x \in \partial E$ for $\partial E$ sufficiently regular, there holds
\begin{align*}
(1-s)H_s(x; E) \to H(x; E) \text{ as } s \to 1-\,.
\end{align*}

We can re-write $H_s(x; E)$ with the help of the fractional divergence $\operatorname{div}^{(s)}$ defined in \eqref{eq:def-frac-div-grad}. To this end, let us define a nonlocal version of the normal vector field $n:\R^d \times \R^d \to \{-1,0,1\}$ with respect to the set $E$ as follows. Given $x \in \R^d$, we denote by $\delta_x \in \R$ the directed distance to the boundary, specifically
\[
\delta_x= \\
\begin{cases}
\dist (x, \partial E) & \text{ for } x \in E^c \,, \\
- \dist (x, \partial E) & \text{ for } x \in E \,. 
\end{cases}
\]
Next, we set $n:\R^d \times \R^d \to \{-1,0,1\}$ by
\[
n(x,y) = \\
\begin{cases}
1 & \text{ if } \delta_y > \delta _ x \,, \\
0 & \text{ if } \delta_y = \delta _ x \,, \\
-1 & \text{ if } \delta_y < \delta _ x \,.
\end{cases}
\]
Then, using the constant $c_{d, s}$ defined in \eqref{eq:perimeter-constant} that was also used for the fractional perimeter, one observes for $x \in \partial E$
\begin{align*}
(1-s) H_s(x; E) = c_{d, s} \operatorname{div}^{(s)} n  (x) \,.
\end{align*}
Thus the nonlocal mean curvature emerges again as the divergence of the normal vector field as in the local case. As before, the equivalence of the two fractional concepts yields the convergence of $\operatorname{div}^{(s)} n  (x)$ to the classical mean curvature.



\end{document}